\newcommand{\F}{\mathcal{F}}
\newcommand{\Rf}{\mathcal{R}}
\def\Xint#1{\mathchoice 
{\XXint\displaystyle\textstyle{#1}}%
{\XXint\textstyle\scriptstyle{#1}}%
{\XXint\scriptstyle\scriptscriptstyle{#1}}%
{\XXint\scriptscriptstyle\scriptscriptstyle{#1}}%
\!\int} 
\def\XXint#1#2#3{{\setbox0=\hbox{$#1{#2#3}{\int}$} 
\vcenter{\hbox{$#2#3$}}\kern-.5\wd0}} 
\def\dashint{\Xint-}
\def\avgint{\Xint-}
\newcommand{\ra}{\rightarrow}
\newcommand{\bey}{\begin{eqnarray*}}
\newcommand{\eey}{\end{eqnarray*}}
\newcommand{\ba}{\begin{align}}
\newcommand{\ea}{\end{align}}
\newcommand{\bea}{\begin{align*}}
\newcommand{\ena}{\end{align*}}
\newcommand{\be}{\begin{equation}}
\newcommand{\ee}{\end{equation}}
\newcommand{\R}{\mathbb R}
\newcommand{\Z}{\mathbb Z}
\newcommand{\Ca}{\mathcal C}
\newcommand{\D}{\mathcal D}
\newcommand{\M}{\mathcal M }
\newcommand{\Sp}{\mathcal S }
\newcommand{\Q}{\mathbb Q }
\newcommand{\bc}{\begin{center}}
\newcommand{\ec}{\end{center}}
\newcommand{\al}{\alpha}
\newcommand{\supp}{\mathrm{supp}}
\DeclareMathOperator*{\esssup}{ess\,sup}
\newtheorem{theorem}{Theorem}[section]
\newtheorem{lemma}[theorem]{Lemma}
\theoremstyle{definition}
\theoremstyle{remark}
\newtheorem{remark}[theorem]{Remark}
\numberwithin{equation}{section}
\begin{document}

\author{Theresa C. Anderson}
\address{Theresa C. Anderson \\ Department of Mathematics \\University of Wisconsin--Madison
  \\Madison, WI 53705, USA}
\email{tcanderson@math.wisc.edu}

\author{David Cruz-Uribe, OFS}
\address{David Cruz-Uribe, OFS \\ Department of Mathematics \\ University of Alabama \\
Tuscaloosa, AL 35487, USA}
\email{dcruzuribe@ua.edu}

\author{Kabe Moen}
\address{Kabe Moen \\Department of Mathematics \\ University of Alabama \\
  Tuscaloosa, AL 35487, USA}
\email{kabe.moen@ua.edu}

\subjclass[2000]{42B20, 42B25}

\title[Extrapolation]{Extrapolation in the scale of generalized
  reverse H\"older weights}
\date{June 14, 2017}

\begin{abstract}
  We develop a theory of extrapolation for weights that satisfy a
  generalized reverse H\"older inequality in the scale of Orlicz
  spaces.  This extends previous results by Auscher and
  Martell~\cite{AM} on limited range extrapolation.  We then provide several applications of our extrapolation techniques.  These applications include new and known two weight inequalities for
  linear and bilinear operators.
\end{abstract}

\thanks{The first author was supported by NSF grant
  DMS-1502464 and by an NSF Graduate Student Fellowship, and completed part of this work while in residence at MSRI in Spring 2017.  The second
  author is supported by NSF Grant 1362425 and research funds from the
  Dean of the College of Arts \& Sciences, the University of
  Alabama.  The third author is supported by Simons Collaborative
  Grant for Mathematicians 16-0427.  The authors would like to thank
  J.~M.~Martell for helpful discussions which revealed a subtle
  mistake in an early version of this paper.}

\maketitle

\section{Introduction}
\label{section:intro}

The theory of extrapolation is a powerful technique in harmonic
analysis:  in a nutshell, it shows that norm inequalities for an operator on the
weighted Lebesgue space $L^p(w)$, for any $1<p<\infty$ and $w$ in the
Muckenhoupt weight class $A_p$, are all a consequence of such an
inequality being true for a single value of $p$.  Extrapolation was
introduced by Rubio de Francia~\cite{rubiodefrancia82} more than
thirty years ago; since then it has been refined and developed in a
number of directions, including applications to Banach functions
spaces, bilinear inequalities and two weight norm inequalities.  We
refer the reader to~\cite{MR2797562} for a more detailed discussion of
extrapolation and its history.

In its classic form, extrapolation depends on the weight $w$ being in
the Muckenhoupt $A_p$ class, $1<p<\infty$:
\[ [w]_{A_p} = \sup_Q \left(\avgint_Q w\,dx\right)\left(\avgint_Q
  w^{1-p'}\,dx\right)^{p-1} < \infty, \]
where the supremum is taken over all cubes $Q$ with sides parallel to
the coordinate axes.  More recently, Auscher and Martell~\cite{AM}
introduced a ``limited range'' version of extrapolation that depends on
both the $A_p$ class of the weight and its reverse H\"older class
$RH_s$, $1<s<\infty$.  We say $w\in RH_s$ if
\[ [w]_{RH_s} = \sup_Q \left(\avgint_Q
  w\,dx\right)^{-1}\left(\avgint_Q w^s\,dx\right)^{\frac{1}{s}} <
\infty, \]
where the supremum is taken over all cubes $Q$.
(For a precise statement of their result, see Theorem~\ref{chemaextrap} below.)  

As an immediate
consequence of their result we prove an extrapolation
theorem that only depends on the reverse H\"older class of the
weight.  We state it here; for precise definitions of the
notation used, please see Section~\ref{section:extrapolation}.

\begin{theorem} \label{RHextrap} 
Let $0<q_0<\infty$ and $\F =\{(f,g)\}$ be a family of pairs of
measurable functions.  Suppose there exists $p_0$ with $0< p_0\leq
q_0$ such that for all $w\in RH_{(\frac{q_0}{p_0})'}$,
\begin{equation} \label{eqn:RHExtrapol1}
\|f\|_{L^{p_0}(w)}\leq C\|g\|_{L^{p_0}(w)} \qquad  (f,g)\in
\F.  
\end{equation}
Then for all $p$, $0<p<q_0$, and $w\in RH_{(\frac{q_0}{p})'}$ we have
\begin{equation} \label{eqn:RHExtrapol2}
\|f\|_{L^p(w)}\leq C\|g\|_{L^p(w)} \qquad \forall \ (f,g)\in \F.
\end{equation}
\end{theorem}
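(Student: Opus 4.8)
The plan is to read Theorem~\ref{RHextrap} off the Auscher--Martell limited range extrapolation theorem (Theorem~\ref{chemaextrap}) by taking the lower exponent of its admissible range to be $0$ and its upper exponent to be $q_0$. That result governs weights lying in a class of the shape $A_{p/p_-}\cap RH_{(p_+/p)'}$, and the key observation is that letting $p_-\to 0$ forces $A_{p/p_-}$ to increase up to $A_\infty$; since $RH_s\subset A_\infty$ for every $s\in(1,\infty]$, the Muckenhoupt factor in their weight condition is then automatically present for any reverse H\"older weight, i.e.\ $A_\infty\cap RH_s=RH_s$. Hence, with $p_+=q_0$, their hypothesis at the exponent $p_0$ over $A_{p_0/p_-}\cap RH_{(q_0/p_0)'}$ is a consequence of~\eqref{eqn:RHExtrapol1}, which is assumed over all of $RH_{(q_0/p_0)'}$, and their conclusion at $p$ over $A_{p/p_-}\cap RH_{(q_0/p)'}$ becomes exactly~\eqref{eqn:RHExtrapol2}. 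If Theorem~\ref{chemaextrap} is stated so as to allow $p_-=0$ outright (with the convention $A_{p/0}=A_\infty$), this is the entire proof.

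If instead only $p_->0$ is admissible, I would run a limiting argument in $p_-$. Fix $(f,g)\in\F$, an exponent $0<p<q_0$, and a weight $w\in RH_{(q_0/p)'}$; since $p<q_0$ the exponent $(q_0/p)'$ is finite, so $w\in A_\infty$ and therefore $w\in A_q$ for some finite $q\geq 1$. Choose $p_->0$ with $p_-<\min\{p,p_0\}$ and $p_-\leq p/q$, so that $p/p_-\geq q$ and hence $w\in A_q\subset A_{p/p_-}$; then $w\in A_{p/p_-}\cap RH_{(q_0/p)'}$, while $p_-<p_0\leq q_0$ and $p_-<p<q_0$ make $p_0$ and $p$ admissible for the range $(p_-,q_0)$. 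Applying Theorem~\ref{chemaextrap} with this $p_-$ and $p_+=q_0$---its hypothesis at $p_0$ holding because $A_{p_0/p_-}\cap RH_{(q_0/p_0)'}\subset RH_{(q_0/p_0)'}$ and~\eqref{eqn:RHExtrapol1} is assumed on that larger class---yields $\|f\|_{L^p(w)}\leq C\|g\|_{L^p(w)}$ uniformly over $\F$, where $C$ is allowed to depend on $p$, $q_0$, $p_0$, the dimension, and (through the choice of $p_-$ and the reverse H\"older constant) on $w$. That is all~\eqref{eqn:RHExtrapol2} requires.

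The only facts doing real work are the inclusion $RH_s\subset A_\infty$ and the monotonicity $A_q\subset A_{q'}$ for $q\leq q'$, which together absorb the auxiliary $A$-condition, so I do not anticipate a substantive obstacle. The points to be careful with are purely bookkeeping: that the degenerate endpoint $p_0=q_0$ (where $(q_0/p_0)'=\infty$, so~\eqref{eqn:RHExtrapol1} is an $RH_\infty$ statement) is still an admissible exponent in Theorem~\ref{chemaextrap}; and that the dependence of the extrapolation constant on $p_-$ is harmless, since $C$ in~\eqref{eqn:RHExtrapol2} is permitted to depend on $p$ and $w$.
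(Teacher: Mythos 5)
Your second paragraph is exactly the paper's argument: use $RH_{(q_0/p)'}\subset A_\infty$ to place $w$ in some $A_q$ with $q$ finite, choose the lower endpoint $s_0=p_-$ with $0<p_-<\min(p/q,p_0)$ so that $w\in A_{p/p_-}\cap RH_{(q_0/p)'}$, observe that~\eqref{eqn:RHExtrapol1} over all of $RH_{(q_0/p_0)'}$ trivially restricts to $A_{p_0/p_-}\cap RH_{(q_0/p_0)'}$, and invoke Theorem~\ref{chemaextrap}. The proof is correct and matches the paper's.
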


The goal of this paper is to prove a generalization of Theorem~\ref{RHextrap} to a
larger class of weights.  To state our results we use the theory of
Young functions and Orlicz norms; for precise definitions, see
Section~\ref{section:orlicz}.   Given a Young function $\Psi$, we say
$w\in RH_\Psi$ if 
\[ \sup_Q  \left(\avgint_Q
  w\,dx\right)^{-1}\|w\|_{\Psi,Q} < \infty, \]
where again the supremum is taken over all cubes $Q$.
This class was first introduced by Harboure {\em et
  al.}~\cite{HSV} with a different but essentially equivalent
definition.   When $\Psi(t)=t^s$, this reduces to the class $RH_s$ defined
above, and as we will show below, these weights also satisfy the
classical reverse H\"older inequality.   Intuitively, they differ in
that they capture different behavior at different values of the range
of the weight.  For instance, if $\Psi$ is the ``oscillatory'' Young
function
\[ \Psi(t) = t^{s+ a\sin(\log\log(e^e+t))}, \qquad 0<a<s-1, \]
then depending on its size, on some cubes  $w\in RH_\Psi$ behaves like
a weight in $RH_{s+a}$ and on others like a weight in $RH_{s-a}$.  
Using these weights we prove the following extrapolation theorem.  The
class $B_r$ is a growth condition on Young functions:  see
Section~\ref{section:orlicz} below for a definition.

\begin{theorem} \label{thm:wtdOrlicz-special}
Given $0<p_0 <q_0$, suppose that for a fixed $\Psi_0 \in
B_{(\frac{q_0}{p_0})'}$ and all $w \in RH_{\Psi_0}$, 
\begin{equation} \label{eqn:wtdOrlicz1} 
 \|f\|_{L^{p_0}(w)} \lesssim \|g\|_{L^{p_0}(w)}, 
\qquad (f,g) \in \F. 
\end{equation}
Define the Young function $\Psi$ by $\Psi_0(t)=\Psi(t^r)$ with $r=\frac{(q_0/p_0)'}{(q_0/p)'} <1$.  If $p_0<p<q_0$ and 
$w\in RH_\Psi$, then we have that
\begin{equation} \label{eqn:extrapol-concl}
\|f\|_{L^{p}(w)} \lesssim \|g\|_{L^{p}(w)}, 
\qquad (f,g) \in \F. 
\end{equation}
\end{theorem}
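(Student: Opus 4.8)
The plan is to derive the inequality at the exponent $p$ from the hypothesis at $p_0$ by a Rubio de Francia iteration carried out in the scale of Orlicz spaces: for each pair $(f,g)$ one manufactures a weight lying in $RH_{\Psi_0}$ to which \eqref{eqn:wtdOrlicz1} can be applied. Fix $p$ with $p_0<p<q_0$ and $w\in RH_\Psi$; we may assume $\|g\|_{L^p(w)}<\infty$ and, after the usual truncation of $f$, also $\|f\|_{L^p(w)}<\infty$. Since $p/p_0>1$, duality in $L^{p/p_0}(w)$ gives
\[
\|f\|_{L^p(w)}^{p_0}=\bigl\||f|^{p_0}\bigr\|_{L^{p/p_0}(w)}=\sup\Bigl\{\intRn|f|^{p_0}Hw\dx:\ 0\le H,\ \|H\|_{L^{(p/p_0)'}(w)}=1\Bigr\}.
\]
It therefore suffices to show that for each admissible $H$ there is a function $V\ge H$ with $\|V\|_{L^{(p/p_0)'}(w)}\le 2$ such that $Vw\in RH_{\Psi_0}$ with reverse H\"older constant independent of $H$. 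Indeed, granting this, \eqref{eqn:wtdOrlicz1} applied to the weight $Vw$, together with $H\le V$ and H\"older's inequality in $L^{p/p_0}(w)$, yields
\[
\intRn|f|^{p_0}Hw\dx\le\intRn|f|^{p_0}Vw\dx\lesssim\intRn|g|^{p_0}Vw\dx\le\bigl\||g|^{p_0}\bigr\|_{L^{p/p_0}(w)}\|V\|_{L^{(p/p_0)'}(w)}\lesssim\|g\|_{L^p(w)}^{p_0},
\]
and taking the supremum over $H$ gives \eqref{eqn:extrapol-concl}.

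Two elementary observations reduce the required membership $Vw\in RH_{\Psi_0}$ to a more tractable statement. First, since $\Psi_0(t)=\Psi(t^r)$, a change of variable in the Luxemburg norm gives $\|Vw\|_{\Psi_0,Q}=\|(Vw)^r\|_{\Psi,Q}^{1/r}$, while Jensen's inequality (recall $r<1$) gives $\avgint_Q Vw\ge\bigl(\avgint_Q (Vw)^r\bigr)^{1/r}$; hence it is enough to arrange that $(Vw)^r\in RH_\Psi$. Second, $RH_\Psi\subset A_\infty$ (because $\Psi$ dominates a power $t^s$ with $s>1$), so $w$ obeys a quantitative reverse Jensen inequality, and a short computation with Luxemburg norms then gives $w^r\in RH_\Psi$. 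Thus the task becomes: construct $V\ge H$ with controlled $L^{(p/p_0)'}(w)$ norm so that $(Vw)^r=V^rw^r\in RH_\Psi$, knowing already that $w^r\in RH_\Psi$. Splitting off the factor $w^r$ by the generalized H\"older inequality for Orlicz spaces, this last step will be reduced to an estimate comparing a Luxemburg average of $V^r$ over $Q$ with the $w^r$-weighted average of $V^r$ over $Q$, for every cube $Q$.

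The function $V$ is produced by a Rubio de Francia series $V=\sum_{k\ge0}\mathcal S^kH/(2\|\mathcal S\|)^k$, where $\mathcal S$ is a positively homogeneous, quasi-subadditive operator on $L^{(p/p_0)'}(w)$, built from the weight $w$ and the Orlicz maximal operator associated with $\Psi$, and designed so that the fixed-point estimate $\mathcal S V\le 2\|\mathcal S\| V$ coming from the iteration is exactly the cube-by-cube estimate identified at the end of the previous paragraph; granted the boundedness of $\mathcal S$ on $L^{(p/p_0)'}(w)$, the series converges and automatically delivers $V\ge H$, $\|V\|_{L^{(p/p_0)'}(w)}\le 2$, and this fixed-point bound, so that the three desired properties of $V$ follow. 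Setting up $\mathcal S$ correctly is delicate, because $RH$-classes are not preserved under multiplication by $A_1$ or $RH_\infty$ weights, so the naive choices (e.g. iterating the Hardy--Littlewood maximal operator) do not convert into the reverse H\"older condition for the product $Vw$; the weight $w$ must genuinely be built into $\mathcal S$, and one must work with the product $Vw$ rather than with $V$ alone. (In the power case $\Psi_0(t)=t^{s_0}$ this is the point where the equivalence ``$u\in RH_{s_0}\iff u^{s_0}\in A_\infty$'' can be exploited to reduce the construction to an ordinary $A_1$-valued Rubio de Francia algorithm.)

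The main obstacle is the boundedness of $\mathcal S$ on $L^{(p/p_0)'}(w)$. This is the only place where the growth condition $\Psi_0\in B_{(q_0/p_0)'}$, the reverse H\"older class $RH_\Psi$ of $w$, and the exponent identity $r=(q_0/p_0)'/(q_0/p)'$ must all be used simultaneously: after unwinding the definition of $\mathcal S$ it amounts to a weighted norm inequality for an Orlicz maximal operator on a space $L^{\beta}(v)$, where $\beta=(p/p_0)'/r$ and $v$ is a fixed negative power of $w$ (namely $v=w^{1-(p/p_0)'}$). I expect this to follow by combining the unweighted estimate for the Orlicz maximal operator --- which holds because $\Psi_0\in B_{(q_0/p_0)'}$ forces $\Psi\in B_{(q_0/p)'}$ --- with the self-improving and Muckenhoupt-type consequences of $w\in RH_\Psi\subset A_\infty$; the arithmetic works out precisely because $r$ is chosen as above and $\Psi_0$ satisfies the $B$-condition at the endpoint exponent $(q_0/p_0)'$. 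Once this boundedness is established, the reductions above assemble into \eqref{eqn:extrapol-concl}, and the case of power Young functions recovers Theorem~\ref{RHextrap}.
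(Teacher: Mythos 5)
Your opening reductions are sound: weighted duality in $L^{p/p_0}(w)$, the rescaling identity $\|Vw\|_{\Psi_0,Q}=\|(Vw)^r\|_{\Psi,Q}^{1/r}$, the one-sided Jensen step, and the observation that $w\in RH_\Psi\cap A_\infty$ forces $w^r\in RH_\Psi$ are all correct. But you have not proved the theorem, because the entire burden is shifted onto an operator $\mathcal S$ that is never written down. You describe $\mathcal S$ only by the property it should have (``the fixed-point estimate $\mathcal S V\le 2\|\mathcal S\|V$ is exactly the cube-by-cube estimate''), and then say you ``expect'' its boundedness on $L^{(p/p_0)'}(w)$ to follow. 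That expectation is precisely the hard part, and nothing in the proposal establishes it. Worse, the route you chose forces $\mathcal S$ to be a genuinely weighted iteration operator: because you took duality in the weighted space, the functional $H$ lives in $L^{(p/p_0)'}(w)$ and the product you must control is $Vw$, so $w$ must sit inside the maximal operator. Building such an operator and proving $L^{(p/p_0)'}(w)$ boundedness under only the hypotheses $w\in RH_\Psi$ and $\Psi_0\in B_{(q_0/p_0)'}$ is not a routine adaptation of the standard Rubio de Francia machinery, and the proposal gives no argument that it can be done.

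The paper sidesteps this by taking duality in the \emph{unweighted} space: it writes $\|f\|_{L^p(w)}^{p_0}=\int f^{p_0}w^{p_0/p}h\,dx$ with $h\in L^{(p/p_0)'}$ of unit norm, so the weight to be placed in $RH_{\Psi_0}$ is $Hw^{p_0/p}$ rather than $Hw$, and $H$ can be built from the ordinary \emph{unweighted} iteration $H=\sum_k M_B^k h/(2\|M_B\|)^k$. The key structural trick is then an exact factorization of inverse Young functions: with $C(t)=\Psi(t^{p/p_0})$ and $B(t)=\Psi(t^s)$, $\tfrac1s=\tfrac1r-\tfrac{p_0}{p}$, one has $C^{-1}(t)B^{-1}(t)=\Psi_0^{-1}(t)$, so the generalized H\"older inequality gives $\|Hw^{p_0/p}\|_{\Psi_0,Q}\lesssim\|H\|_{B,Q}\|w^{p_0/p}\|_{C,Q}$, and each factor is controlled separately: $H\in RH_B\cap A_1$ by construction, and $w^{p_0/p}\in RH_C$ by a rescaling of $w\in RH_\Psi$ together with $w^{p_0/p}\in RH_{p/p_0}$. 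The arithmetic $\tfrac1s(p/p_0)'=(q_0/p)'$ is what makes $B\in B_{(p/p_0)'}$ and hence $M_B$ bounded on the right Lebesgue space. This is the idea missing from your proposal: by moving the weight out of the duality pairing and into the test weight via $w^{p_0/p}$, the problem is converted into splitting an Orlicz reverse H\"older condition for a product into two halves via a factorization of $\Psi_0^{-1}$, and each half is elementary. If you want to salvage your plan, switch to unweighted duality; as written, the argument has a genuine gap at the construction of $\mathcal S$.
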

 
We actually prove a more general result when $p= q_0$ and also when $p_0 = q_0$; see
Theorem~\ref{thm:wtdOrlicz} below.  

\bigskip

To illustrate the utility of our extrapolation results, we give
applications to the study of two weight norm inequalities for linear and
bilinear Calder\'on-Zygmund singular integral operators, and to the
theory of one weight inequalities for the bilinear fractional integral
operator.   Two weight norm inequalities have been studied for many
years by a number of authors.  In recent years this problem has
received renewed attention because of its close connection to the
so-called ``$A_2$ conjecture'' for singular integral operators that
was proved by Hyt\"onen~\cite{hytonenP2010}.   In the
decade of work that led to the proof of this result, it became clear that in order to
get the desired sharp constant estimate, the problem had to be treated
as a two-weight problem, with the $A_p$ condition used only once at
some key step.  

The techniques used, particularly the dyadic sparse operators that were
introduced by Lerner~\cite{Lern}, have been applied to the study of
``$A_p$ bump'' conditions for two-weight norm inequalities.  This
approach to generalizing the two-weight $A_p$ condition was first
introduced by Neugebauer~\cite{MR687633} but was systematically
developed by P\'erez~\cite{perez94,perez95} (see
also~\cite{MR2797562} and \cite{Anderson}).   The following result was first conjectured
by P\'erez and the second author~\cite{cruz-uribe-perez02} and finally
proved by Lerner~\cite{Lern}.

\begin{theorem} \label{thm:double-bump}
Suppose $1<p<\infty$ and $\Phi$ and $\Psi$ are Young functions such
that $\bar{\Phi}\in B_{p'}$ and $\bar{\Psi}\in B_p$.  If $(u,v)$ is a
pair of weights that satisfies
\[ \sup_Q\|u\|_{\Phi,Q}\|v^{-1}\|_{\Psi,Q} < \infty, \]
then given any Calder\'on-Zygmund singular integral operator $T$,
\[ \|(Tf)u\|_{L^p} \lesssim \|fv\|_{L^p}. \]
\end{theorem}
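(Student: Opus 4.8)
The plan is to reduce to dyadic sparse operators via Lerner's sparse domination and then run the standard ``sparse form with Orlicz bumps'' estimate. First, the hypothesis $\rho:=\sup_Q\|u\|_{\Phi,Q}\|v^{-1}\|_{\Psi,Q}<\infty$ forces $v^{-1}\in L^1_{\mathrm{loc}}$, so by H\"older's inequality $f=(fv)\,v^{-1}\in L^1_{\mathrm{loc}}$ whenever $fv\in L^p$, and hence $Tf$ is well defined. By the sparse domination of Calder\'on--Zygmund operators~\cite{Lern}, for each such $f$ there are finitely many sparse families $\mathcal S_1,\dots,\mathcal S_N$ of dyadic cubes, each relative to some dyadic grid and with $N=N(n)$, such that
\[
|Tf(x)|\lesssim\sum_{j=1}^N A_{\mathcal S_j}|f|(x),\qquad A_{\mathcal S}g:=\sum_{Q\in\mathcal S}\Big(\avgint_Q g\dx\Big)\mathbbm{1}_Q,
\]
the implicit constant depending only on $n$ and $T$. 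It therefore suffices to prove $\|(A_{\mathcal S}f)u\|_{L^p}\lesssim\|fv\|_{L^p}$ for a single sparse family $\mathcal S$, with constant independent of $\mathcal S$; we may also assume $f\ge0$.

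By duality it is enough to bound $\int(A_{\mathcal S}f)\,u\,h\dx$ for nonnegative $h$ with $\|h\|_{L^{p'}}=1$. Expanding the sparse operator,
\[
\int(A_{\mathcal S}f)\,u\,h\dx=\sum_{Q\in\mathcal S}|Q|\,\Big(\avgint_Q f\dx\Big)\Big(\avgint_Q uh\dx\Big).
\]
On each cube we apply the generalized H\"older inequality in Orlicz spaces twice: splitting $f=(fv)\cdot v^{-1}$ and pairing $v^{-1}$ with $\Psi$ and $fv$ with the conjugate function $\bar\Psi$ gives $\avgint_Q f\dx\lesssim\|fv\|_{\bar\Psi,Q}\|v^{-1}\|_{\Psi,Q}$, while pairing $u$ with $\Phi$ and $h$ with $\bar\Phi$ gives $\avgint_Q uh\dx\lesssim\|u\|_{\Phi,Q}\|h\|_{\bar\Phi,Q}$. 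Multiplying these and using $\|u\|_{\Phi,Q}\|v^{-1}\|_{\Psi,Q}\le\rho$ for every $Q$, we obtain
\[
\int(A_{\mathcal S}f)\,u\,h\dx\lesssim\rho\sum_{Q\in\mathcal S}|Q|\,\|fv\|_{\bar\Psi,Q}\,\|h\|_{\bar\Phi,Q},
\]
so that both weights have now been separated from the cube averages.

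To conclude, let $\{E_Q\}_{Q\in\mathcal S}$ be the pairwise disjoint sets with $E_Q\subset Q$ and $|E_Q|\ge\eta|Q|$ supplied by sparseness, and write $M_\Theta g(x)=\sup_{Q\ni x}\|g\|_{\Theta,Q}$ for the Orlicz maximal operator. Since $\|fv\|_{\bar\Psi,Q}\le M_{\bar\Psi}(fv)(x)$ and $\|h\|_{\bar\Phi,Q}\le M_{\bar\Phi}h(x)$ whenever $x\in E_Q$, the previous sum is at most
\[
\eta^{-1}\sum_{Q\in\mathcal S}\int_{E_Q}M_{\bar\Psi}(fv)\,M_{\bar\Phi}h\dx\le\eta^{-1}\int_{\R^n}M_{\bar\Psi}(fv)\,M_{\bar\Phi}h\dx,
\]
and this last quantity is $\le\eta^{-1}\|M_{\bar\Psi}(fv)\|_{L^p}\|M_{\bar\Phi}h\|_{L^{p'}}$ by H\"older's inequality. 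Finally, since $\bar\Psi\in B_p$ and $\bar\Phi\in B_{p'}$, P\'erez's theorem on the $L^p$ boundedness of Orlicz maximal operators gives $\|M_{\bar\Psi}(fv)\|_{L^p}\lesssim\|fv\|_{L^p}$ and $\|M_{\bar\Phi}h\|_{L^{p'}}\lesssim\|h\|_{L^{p'}}=1$; collecting the estimates and taking the supremum over $h$ yields $\|(Tf)u\|_{L^p}\lesssim\|fv\|_{L^p}$. The one genuinely deep ingredient is the sparse domination of $T$; once that is granted, the rest is the routine ``sparse plus Orlicz bump'' computation, the only points demanding care being the local integrability of $f$ and the reduction to finitely many dyadic grids.
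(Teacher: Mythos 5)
Your argument is correct, but it takes a genuinely different route from the one in the paper. The paper's proof of this statement (presented as Theorem~\ref{thm:CZO}) is designed explicitly to showcase the reverse H\"older extrapolation machinery it develops: after sparse domination, it invokes Theorem~\ref{RHOrlicz} with $q_0=p$, $p_0=1$ to reduce matters to proving a \emph{weighted} $L^1$ Coifman--Fefferman inequality $\|(T^\Sp f)u\|_{L^1(w)}\lesssim\|M_{\bar\Psi}(fv)\|_{L^1(w)}$ for all $w\in RH_{\bar\Phi}$. In that reduced estimate, the bump $\|u\|_{\Phi,Q}$ is paired against the Orlicz average $\|w\|_{\bar\Phi,Q}$ of the extrapolation weight, and the reverse H\"older condition on $w$ converts $\|w\|_{\bar\Phi,Q}|Q|$ into $w(E_Q)$ directly; the generalized H\"older inequality is applied only once per cube, the condition $\bar\Phi\in B_{p'}$ is consumed by the extrapolation step, and the dual function never appears. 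You instead run the classical direct duality argument: dualize $L^p$ against $h\in L^{p'}$, apply generalized H\"older twice on each cube to split off $\|fv\|_{\bar\Psi,Q}$ and $\|h\|_{\bar\Phi,Q}$, pass from $|Q|$ to $|E_Q|$ using sparseness, and finish with Lebesgue H\"older plus two applications of P\'erez's Orlicz maximal function theorem (Theorem~\ref{thm:orlicz-max}). This is, in substance, the original proof due to Lerner~\cite{Lern} that the paper cites before announcing its alternative. Your route is shorter and entirely self-contained given sparse domination and P\'erez's theorem; the paper's route is longer but demonstrates that the result falls out of the general $RH_\Psi$-extrapolation framework, which is the point of the paper. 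Both proofs also yield the intermediate Coifman--Fefferman inequality $\|(Tf)u\|_{L^p}\lesssim\|M_{\bar\Psi}(fv)\|_{L^p}$ under only $\bar\Phi\in B_{p'}$, so nothing is lost in your version. One very small remark on hygiene: your preliminary observation that $f\in L^1_{\mathrm{loc}}$ is fine, but for the duality reduction one should also note (as the paper's extrapolation framework does implicitly) that it suffices to work with $f\in L^\infty_c$ and truncated versions of $Tf$ so that $\|(Tf)u\|_{L^p}$ is a priori finite; this is a standard normalization and does not affect the argument.
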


Below (see Theorem~\ref{thm:CZO}) we give a new proof of this result using extrapolation.
Moreover, we prove a slight generalization, proving that $T$ satisfies
a two weight, Coifman-Fefferman type inequality:
\[ \|(Tf)u\|_{L^p} \lesssim \|M_{\bar{\Psi}}(fv)\|_{L^p}. \]
We also extend Theorem~\ref{thm:double-bump} to the
bilinear setting, proving the analogous result for bilinear
Calder\'on-Zygmund singular integral operators.  These are the natural
generalization of the linear operators, and have been considered by a
number of authors:  see~\cite{Grafakos:2002wu,Kenig:1999vk}.  One
weight norm inequalities were characterized
by~\cite{Grafakos:2004us,Lerner:2009dq}.  
Our results in the two weight case are new.  The exact
condition required depends on whether $p>1$ or $1/2<p \leq 1$:  see
Theorems~\ref{bilinearp>1} and~\ref{thm:bilinear-p<1}.  

\medskip

Finally, we consider weighted norm inequalities for the bilinear
fractional integral operator
 $$BI_\al(f,g)(x)=\int_{\R^n} \frac{f(x-y)g(x+y)}{|y|^{n-\al}}\,dy.$$
This operator is the fractional analog of the bilinear Hilbert
transform; for weighted norm inequalities and  a history of this
operator, see~\cite{CDiPO,HoangMoen,M2}.  The corresponding maximal operator is
$$BM_\al(f,g)(x)=\sup_{r>0}\frac{1}{(2r)^{n-\al}}\int_{[-r,r]^n}
|f(x-y)g(x+y)|\,dy.$$ 
We also recall the less singular
bilinear fractional operators:
$$\mathcal
I_\al(f,g)(x)=\int_{\R^{2n}}\frac{f(y)g(z)}{(|x-y|+|x-z|)^{2n-\al}}\,dydz,
\qquad 0<\al<2n,$$ 
and the associated maximal operator
$$\mathcal M_\al(f,g)(x)=\sup_{Q\ni x}|Q|^{\frac{\al}{n}}\dashint_Q
|f(y)|\,dy \cdot \dashint_Q |g(z)|dz\qquad 0\leq\al<2n.  $$
These operators are the fractional operators corresponding to bilinear
CZOs.

As an application of our extrapolation techniques,
we are able to prove a Coifman-Fefferman type inequality
relating $BI_\alpha$ to the the {\it less singular} bilinear maximal
operator, $\M_\al$.  Our result improves one that was first proved by the third author in~\cite{M2}.

The remainder of this paper is organized as follows.  In
Section~\ref{section:extrapolation} we give some preliminary information
about Muckenhoupt weights and extrapolation,  and then prove Theorem~\ref{RHextrap}.  In
Section~\ref{section:orlicz} we give the necessary background
information on Young functions and Orlicz spaces, and then
prove two extrapolation theorems.  The first one is an unweighted extrapolation used in our applications, Theorem~\ref{RHOrlicz}, and the second is Theorem \ref{thm:wtdOrlicz}, a generalization of Theorem~\ref{thm:wtdOrlicz-special}.  Though some of the proof of Theorem~\ref{RHOrlicz} overlaps with
Theorem~\ref{thm:wtdOrlicz}, we include the proof for two reasons.
First, it is more general due to the range of $p$;
second, the proof makes clear the main ideas while avoiding the
technicalities that arise in the weighted case.  Finally, in
Section~\ref{section:bilinear} we prove our applications in
Theorems~\ref{thm:CZO}, \ref{bilinearp>1}, \ref{thm:bilinear-p<1},
and~\ref{thm:bilinear-CF}. 

Throughout this paper, $n$ will denote the dimension of the space
$\R^n$.  If we write $A\lesssim B$, we mean $A\leq CB$ for some
constant $C$; $A\approx B$ means $A\lesssim B$ And $B\lesssim A$.  Whether implicit or explicit, unless otherwise
specified the constants may depend
on the dimension $n$, $p$, the weights and the operator being studied,
and can change from line to line.  

\begin{remark}
As we were completing this paper, we learned that
Theorem~\ref{RHextrap} was discovered independently by Martell and
Prisuelos~\cite{martell-prisuelos}. 
\end{remark}

\section{$A_p$, $RH_s$ weights and extrapolation} 
\label{section:extrapolation}

\subsection*{Preliminaries about weights}
Hereafter, by a weight we mean a non-negative, locally integrable function.
We begin with a few preliminary facts about $A_p$ and $RH_s$ weights
we will need in this and the following section.  
Beyond the $A_p$ and $RH_s$ classes defined above, we define three
additional weight classes.  We say that a weight $w\in A_1$ if
\[ [w]_{A_1} = \sup_Q \left(\avgint_Q w\,dx\right)\esssup_{x\in Q}\big( w^{-1}(x)\big) <
\infty, \]
where the supremum is taken over all cubes $Q$.
Recall that the $A_p$ classes are nested:  for all $q>p>1$,
$A_1\subset  A_p \subset A_q$.   Analogously, we define
$w\in RH_\infty$ if
\[ [w]_{RH_\infty} =\sup_Q \Big(\esssup_{x\in Q} w(x)\Big)\left(\avgint_Q
  w\,dx\right)^{-1} < \infty, \]
where the supremum is taken over all cubes $Q$; then for all
$r<s<\infty$, $ RH_\infty\subset RH_s \subset RH_r$.  Finally, we let
$A_\infty$ denote the union of the $A_p$ classes:
\[ A_\infty = \bigcup_{1<p<\infty} A_p. \]

There is a close connection between $A_p$ weights and the
Hardy-Littlewood maximal operator.  Given $f\in L^1_{loc}$, define
\[ Mf(x) = \sup_Q \avgint_Q |f|\,dy \cdot \chi_Q(x); \]
where the supremum is taken over all cubes $Q$.

For proofs of the results in the following lemma,
see~\cite{cruz-uribe-neugebauer95,duoandikoetxea01,garcia-cuerva-rubiodefrancia85}.

\begin{lemma} \label{lemma:Ap-wts}
Given a weight w:
\begin{enumerate}

\item $w\in A_\infty$ if and only if there exists $s>1$ such that
$w\in RH_s$;

\item $w\in A_\infty$ if and only if there exist constants
  $0<\alpha,\,\beta<1$ such that given any cube $Q$ and $E\subset Q$
  with $|E|<\alpha|Q|$, $w(E) < \beta w(Q)$;

\item  $w\in RH_s$ for some $1< s<\infty$ if and only if $w^s\in
  A_{\infty}$;

\item if $w\in RH_\infty$, then $w^s\in RH_\infty$ for all $s>0$;

\item given $p>1$,  if $w\in A_1$ then $w^{1-p'}\in RH_{\infty}\cap
  A_p$, and if $w\in RH_{\infty}\cap A_p$ then $w^{1-p'}\in A_1$; 

\item for all $0<r <1$, the function
$(Mw)^r\in A_1$. 

\end{enumerate}
\end{lemma}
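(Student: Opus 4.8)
The plan is to regard parts \emph{(a)}, \emph{(b)}, \emph{(c)} as the classical ``$A_\infty$ dictionary'' and parts \emph{(d)}, \emph{(e)}, \emph{(f)} as elementary manipulations with Jensen's inequality (and, for \emph{(f)}, the weak type $(1,1)$ bound for $M$). The organizing observation is that the measure condition in \emph{(b)} is the natural hub through which everything passes, and that the only genuinely non-elementary ingredient in the lemma is the reverse H\"older (Gehring) self-improvement which turns that measure condition into a gain of integrability; this is the step I expect to be the main obstacle.

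I would first dispatch the ``soft'' implications feeding \emph{(b)}. If $w\in A_p$ and $E\subset Q$, then writing $|E|/|Q|=\avgint_Q\chi_E\, w^{1/p}w^{-1/p}\,dx$ and applying H\"older's inequality with exponents $p,p'$ gives $|E|/|Q|\le[w]_{A_p}^{1/p}\big(w(E)/w(Q)\big)^{1/p}$; applying this with $E$ replaced by $Q\setminus E$ shows that $|E|<\alpha|Q|$ forces $w(E)<\beta w(Q)$ with $\beta=1-[w]_{A_p}^{-1}(1-\alpha)^p<1$ once $\alpha$ is small, i.e.\ \emph{(b)}. The same estimate applied to the defining inequality of $RH_s$ gives $w\in RH_s\Rightarrow$\emph{(b)}. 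Conversely, iterating the measure condition (using it on $Q\setminus E$ as well) upgrades \emph{(b)} to a power bound $|\{x\in Q:w(x)<t\avgint_Q w\}|\lesssim t^{\eta}|Q|$ for some $\eta>0$, and integrating the distribution function of $w^{-1/(p-1)}$ against this bound produces $\avgint_Q w^{-1/(p-1)}\lesssim(\avgint_Q w)^{-1/(p-1)}$ whenever $1/(p-1)<\eta$, i.e.\ $w\in A_p$. Since $A_\infty=\bigcup_p A_p$ by definition, this settles \emph{(b)} and reduces \emph{(a)} to the single implication $A_p\Rightarrow RH_s$.

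That implication is the reverse H\"older inequality, and here is where the work lies. I would run the standard Calder\'on--Zygmund stopping-time argument: fix $Q_0$; for each $\lambda>\avgint_{Q_0}w$ decompose $\{w>\lambda\}\cap Q_0$, up to a null set, into disjoint cubes $\{Q_j\}$ with $\lambda<\avgint_{Q_j}w\le 2^n\lambda$; use \emph{(b)} on each $Q_j$ to bound $w(\{w>\lambda\}\cap Q_j)$ by a fixed fraction of $\lambda|Q_j|$; sum in $j$ and integrate against $\lambda^{\varepsilon-1}\,d\lambda$ to obtain $\avgint_{Q_0}w^{1+\varepsilon}\lesssim(\avgint_{Q_0}w)^{1+\varepsilon}$ for all small $\varepsilon>0$. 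The same argument applied to the $RH_s$ inequality itself gives its openness, $w\in RH_s\Rightarrow w\in RH_{s+\varepsilon}$ for some $\varepsilon>0$. With this in hand I would prove \emph{(c)} through the exponential form of $A_\infty$: Jensen's inequality $\exp(\avgint_Q\log f)\le\avgint_Q f$ gives $[u]^{\exp}_{A_\infty}:=\sup_Q(\avgint_Q u)\exp(-\avgint_Q\log u)\le[u]_{A_q}$ for every $q$, so $u\in A_\infty$ forces $[u]^{\exp}_{A_\infty}<\infty$; taking $u=w^s$ and using $\log w^s=s\log w$ together with $\exp(\avgint_Q\log w)\le\avgint_Q w$ yields $\avgint_Q w^s\le[w^s]^{\exp}_{A_\infty}(\avgint_Q w)^s$, i.e.\ $w\in RH_s$. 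Conversely, if $w\in RH_s$ then $w\in RH_{s+\varepsilon}$ by openness, and $\avgint_Q(w^s)^{1+\varepsilon/s}=\avgint_Q w^{s+\varepsilon}\lesssim(\avgint_Q w)^{s+\varepsilon}\le(\avgint_Q w^s)^{(s+\varepsilon)/s}$, the last step by Jensen (since $s>1$); hence $w^s\in RH_{1+\varepsilon/s}\subset A_\infty$ by \emph{(a)}.

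Parts \emph{(d)}, \emph{(e)}, \emph{(f)} are then routine. For \emph{(d)}, put $M_Q:=[w]_{RH_\infty}\avgint_Q w\ge\esssup_Q w$: when $s\ge1$, $\esssup_Q w^s=(\esssup_Q w)^s\le M_Q^s\le[w]_{RH_\infty}^s\avgint_Q w^s$ by Jensen, and when $0<s<1$ the pointwise inequality $w^s=w^{s-1}w\ge M_Q^{s-1}w$ on $Q$ gives $\avgint_Q w^s\ge M_Q^{s-1}\avgint_Q w=M_Q^s/[w]_{RH_\infty}\ge[w]_{RH_\infty}^{-1}\esssup_Q w^s$. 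For \emph{(e)}, set $\sigma=w^{1-p'}=w^{-1/(p-1)}$, so that $w=\sigma^{1-p}$: Jensen gives $\avgint_Q\sigma\ge(\avgint_Q w)^{-1/(p-1)}$, which together with $\avgint_Q w\le[w]_{A_1}\essinf_Q w$ yields $\esssup_Q\sigma=(\essinf_Q w)^{-1/(p-1)}\le[w]_{A_1}^{1/(p-1)}\avgint_Q\sigma$, i.e.\ $\sigma\in RH_\infty$; the Muckenhoupt membership of $\sigma$ follows by combining the pointwise bound $\sigma\le(\essinf_Q w)^{1-p'}$ with the $A_1$ estimate, and the converse implication reverses these estimates, using the $A_p$ and $RH_\infty$ bounds for $w$ to control $\avgint_Q w^{1-p'}$ and $\esssup_Q w$ respectively. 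For \emph{(f)} (Coifman--Rochberg), fix $Q$, split $w=w\chi_{3Q}+w\chi_{\R^n\setminus3Q}=:w_1+w_2$, and use $(a+b)^r\le a^r+b^r$ for $0<r<1$ to get $\avgint_Q(Mw)^r\le\avgint_Q(Mw_1)^r+\avgint_Q(Mw_2)^r$; Kolmogorov's inequality and the weak $(1,1)$ bound for $M$ give $\avgint_Q(Mw_1)^r\lesssim(\avgint_{3Q}w)^r\le\inf_Q(Mw)^r$, while the elementary fact that any cube meeting both $Q$ and $\R^n\setminus3Q$ has side length comparable to or larger than $\ell(Q)$ shows $Mw_2\lesssim\inf_Q Mw$ on $Q$, hence $\avgint_Q(Mw_2)^r\lesssim\inf_Q(Mw)^r$; adding these yields $\avgint_Q(Mw)^r\lesssim\inf_Q(Mw)^r$ for every $Q$ (where $Mw\not\equiv\infty$, so that $(Mw)^r$ is a genuine weight), which is $(Mw)^r\in A_1$ with constant depending only on $n$ and $r$.
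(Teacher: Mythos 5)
The paper offers no proof of this lemma at all --- it simply cites Cruz-Uribe--Neugebauer, Duoandikoetxea, and Garc\'ia-Cuerva--Rubio de Francia --- so there is no internal argument to compare against. Your outline is the standard textbook route: H\"older and the measure condition for the easy implications in \emph{(a)}--\emph{(b)}, the Calder\'on--Zygmund stopping-time proof of the Gehring self-improvement for the hard direction, the exponential $A_\infty$ constant together with Jensen and openness for \emph{(c)}, pointwise manipulations for \emph{(d)}, and the Coifman--Rochberg splitting with Kolmogorov for \emph{(f)}. All of these are correct in substance and in the right order of dependence.

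The one place that needs attention is \emph{(e)}. You are careful to note that $w=\sigma^{1-p}$ where $\sigma=w^{1-p'}$, and your estimate $\avgint_Q\sigma\le(\essinf_Q w)^{1-p'}\le[w]_{A_1}^{p'-1}\bigl(\avgint_Q w\bigr)^{1-p'}=[w]_{A_1}^{p'-1}\bigl(\avgint_Q\sigma^{1-p}\bigr)^{1-p'}$ is exactly the $A_{p'}$ condition for $\sigma$ (since $(p')'=p$). In other words, your sketch proves $w\in A_1\Rightarrow w^{1-p'}\in RH_\infty\cap A_{p'}$, not $w^{1-p'}\in RH_\infty\cap A_p$ as the lemma is printed. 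You should be explicit that the class you obtain is $A_{p'}$, because the statement with $A_p$ is actually false for $1<p<2$: take $n=1$, $p=3/2$, $w(x)=|x|^{-1/2}\in A_1$; then $w^{1-p'}=|x|\notin A_{3/2}$ since $\avgint_{[-r,r]}|x|^{-2}=\infty$, though $|x|\in RH_\infty\cap A_{3}=RH_\infty\cap A_{p'}$. So the $A_p$ in the forward implication of \emph{(e)} is a misprint for $A_{p'}$ (the converse, $RH_\infty\cap A_p\Rightarrow A_1$, is correct as stated, and the two directions are consistent under $p\leftrightarrow p'$ because $(1-p)(1-p')=1$). Your argument for the converse --- using $A_p$ to bound $\avgint_Q w^{1-p'}$ from above and $RH_\infty$ to bound $\essinf_Q w^{1-p'}$ from below --- is the right one. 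Beyond this, the only soft spot is that the passage from the measure condition \emph{(b)} back to $A_p$ is compressed to a one-line claim about a distributional power bound; that step is where the real CZ work lives, and in a written-up version it should be expanded rather than asserted.
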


\subsection*{Extrapolation}
While a major application of extrapolation is to prove norm inequalities
for operators, it can be applied much more broadly if it is stated
in terms of pairs of functions.  We follow the formulation used
in~\cite{MR2797562}.   Hereafter, $\F= \{(f,g)\}$ will
denote a family of pairs of non-negative, measurable functions that
are not identically $0$. 
Given a fixed family $\F$ and some weighted space $L^p(w)$,  if we
write 
\[ \|f\|_{L^p(w)} \lesssim\|g\|_{L^p(w)}, \qquad (f,g) \in \F, \]
then we mean that this inequality holds for all pairs $(f,g)$ for
which the lefthand term in the inequality is finite.  (This assumption
assures that in the underlying proofs, it is possible to estimate the
norm by duality.)  The constant $C$ can depend on the $A_p$ and/or
$RH_s$ characteristic of $w$, and on $s$ and $p$, but it cannot depend
on the weight $w$ itself.  

In practice, to prove weighted norm inequalities for an operator $T$,
it suffices to consider a family of pairs of functions of the form
$(|Tf|,|f|)$, where $f$ is taken from some suitably chosen dense
family of functions (e.g., $f\in L^\infty_c$).  In order to get the
norm finiteness of the first term, we can replace $|Tf|$ by
$\min(|Tf|,N)\chi_{B(0,N)}$ and then take the limit as $N\rightarrow
\infty$.  

\medskip

We can now prove Theorem~\ref{RHextrap}.  As we noted in the
Introduction, this result is a consequence of the limited range
extrapolation theorem of Auscher and Martell~\cite[Theorem~4.9]{AM}. 

\begin{theorem} \label{chemaextrap} Given $0<s_0<q_0<\infty$ and a
  family $\F =\{(f,g)\}$, suppose there exists 
  $s_0\leq p_0\leq q_0$ such that for all $w\in A_{\frac{p_0}{s_0}}\cap
RH_{(\frac{q_0}{p_0})'}$, 
$$\|f\|_{L^{p_0}(w)}\lesssim\|g\|_{L^{p_0}(w)} \qquad (f,g)\in
\F.$$ 
Then for all $s_0<p<q_0$ and $w\in A_{\frac{p}{s_0}}\cap RH_{(\frac{q_0}{p})'}$,
$$\|f\|_{L^p(w)}\lesssim\|g\|_{L^p(w)} \qquad  (f,g)\in \F.$$ 
\end{theorem}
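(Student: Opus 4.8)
The plan is to follow Auscher and Martell: reduce to the classical Rubio de Francia extrapolation theorem by rescaling the family of pairs, so that the real content becomes a Rubio de Francia iteration adapted to the restricted range $(s_0,q_0)$.

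\emph{Reduction to $s_0=1$.} For any pair $(f,g)$ and any weight $w$ one has the trivial identity $\|f\|_{L^p(w)}=\big\||f|^{s_0}\big\|_{L^{p/s_0}(w)}^{1/s_0}$, so the asserted inequality for $\F$ with parameters $(s_0,p_0,q_0)$ is equivalent, up to the value of the implicit constant, to the same inequality for the rescaled family $\F^{s_0}=\{(|f|^{s_0},|g|^{s_0}):(f,g)\in\F\}$ with parameters $(1,\,p_0/s_0,\,q_0/s_0)$. Here $q_0/s_0>1$ since $s_0<q_0$, and the weight classes occurring are unchanged, since $(q_0/p_0)'=\big((q_0/s_0)/(p_0/s_0)\big)'$. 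Writing $q_1=q_0/s_0$, it therefore suffices to prove: given $1\le p_0\le q_1$, if the inequality holds for all $w\in A_{p_0}\cap RH_{(q_1/p_0)'}$, then it holds for all $1<p<q_1$ and all $w\in A_p\cap RH_{(q_1/p)'}$ (when $p_0=q_1$ the class $RH_{(q_1/p_0)'}$ is $RH_\infty$ and only $p<p_0$ arises; when $p_0=1$ only $p>p_0$ arises).

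\emph{The extrapolation step.} One uses throughout the elementary identity $A_a\cap RH_b=\{v:v^b\in A_{b(a-1)+1}\}$, immediate from Jensen's inequality and the definitions, together with Lemma~\ref{lemma:Ap-wts}; these let one detect membership in $A_{p_0}\cap RH_{(q_1/p_0)'}$. The case $p=p_0$ is the hypothesis. For $p_0<p<q_1$, fix $w\in A_p\cap RH_{(q_1/p)'}$ and a pair $(f,g)$ with finite left-hand side. Since $p>p_0$, duality in $L^{p/p_0}(w)$ gives
$$\|f\|_{L^p(w)}^{p_0}=\Big\||f|^{p_0}\Big\|_{L^{p/p_0}(w)}=\sup\Big\{\int|f|^{p_0}\,h\,w\,dx:\ h\ge 0,\ \|h\|_{L^{(p/p_0)'}(w)}=1\Big\}.$$
One then runs a Rubio de Francia iteration to dominate $h$ by some $H\ge h$ with $\|H\|_{L^{(p/p_0)'}(w)}\le 2$, arranged so that a product of $w$, a power of $H$, and a power of an auxiliary maximal function is a weight $W$ with $W^{(q_1/p_0)'}\in A_1$, that is, $W\in A_{p_0}\cap RH_{(q_1/p_0)'}$, with characteristic bounded in terms of $[w]_{A_p}$ and $[w]_{RH_{(q_1/p)'}}$ alone and with $w\lesssim W$ pointwise; this uses the $A_\infty$-character of $w^{(q_1/p_0)'}$ (available because $w\in RH_{(q_1/p)'}\subset RH_{(q_1/p_0)'}$) and Lemma~\ref{lemma:Ap-wts}\,(6). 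Applying the hypothesis at $p_0$ to $W$, and then Hölder's inequality in the pair $L^{p/p_0}(w),\,L^{(p/p_0)'}(w)$ to $\int|g|^{p_0}H\,w$, closes the estimate. The case $1<p<p_0$ follows by the dual argument, after checking that the relevant dual weight lies in the class appropriate to the exponent conjugate to $p$ within $(1,q_1)$.

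The main obstacle — and the genuine difference from classical extrapolation — is engineering the iterate $W$ to lie in $A_{p_0}\cap RH_{(q_1/p_0)'}$ and no larger class: its $A$- and $RH$-characteristics must be controlled \emph{simultaneously}, the reverse-H\"older exponent $(q_1/p)'$ moves with $p$, and the needed $A_\infty$-character of $w^{(q_1/p_0)'}$ is available only when $p\ge p_0$, which is why the two ranges must be handled separately with one by duality. Since the statement is due to Auscher and Martell, one may of course also simply invoke~\cite[Theorem~4.9]{AM}.
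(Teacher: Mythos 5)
You should know first that the paper does not prove Theorem~\ref{chemaextrap} at all: it is quoted as \cite[Theorem~4.9]{AM} and used as a black box in the derivation of Theorem~\ref{RHextrap}. So your closing sentence --- that one may simply invoke Auscher--Martell --- is literally what the paper does, and judged against the paper your proposal is not worse. Your strategic outline of the actual proof is also the right one: the rescaling to $s_0=1$ is correct and lossless, since $\|f\|_{L^p(w)}=\||f|^{s_0}\|_{L^{p/s_0}(w)}^{1/s_0}$ and $(q_0/p)'=\bigl((q_0/s_0)/(p/s_0)\bigr)'$, so the weight classes are unchanged; the Johnson--Neugebauer identity $A_a\cap RH_b=\{v:v^b\in A_{b(a-1)+1}\}$ is indeed the device by which membership in the limited-range classes is converted into an $A_q$ statement amenable to iteration; and duality in $L^{p/p_0}(w)$ is the correct reduction for $p>p_0$.

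As a self-contained argument, however, the proposal has a genuine gap at exactly the point you yourself flag as ``the main obstacle.'' The entire content of the theorem is the construction of the majorant $W$: you assert that a Rubio de Francia iteration can be ``arranged so that a product of $w$, a power of $H$, and a power of an auxiliary maximal function'' lands in $A_{p_0}\cap RH_{(q_1/p_0)'}$ with controlled characteristic, while simultaneously $h\le H$ and $\|H\|_{L^{(p/p_0)'}(w)}\lesssim 1$, but you never specify which operator is iterated, which powers of $w$ and $h$ enter it, or how the three properties are verified. In \cite{AM} this is done by iterating a weighted, rescaled maximal operator of the form $h\mapsto w^{-\theta}\bigl(M(h^{\tau}w^{\sigma})\bigr)^{1/\tau}$ with exponents chosen so that Lemma~\ref{lemma:Ap-wts}(f) and the Johnson--Neugebauer identity combine to place $W$ in the target class; the choice of exponents is the nontrivial computation, and without it the estimate does not close. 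The same objection applies to the range $s_0<p<p_0$, which you dispose of in one sentence (``the dual argument'') even though it requires a second, structurally different iteration applied on the $f$-side rather than the dual side. In short: correct skeleton, but the load-bearing construction is asserted rather than performed, so as written the only complete route through your proposal is the citation of \cite[Theorem~4.9]{AM} --- which is also the paper's route.
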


\begin{proof}[Proof of Theorem~\ref{RHextrap}]  
Fix $0<p<q_0$ and $w\in RH_{(\frac{q_0}{p})'}$; we will show that 
$$\|f\|_{L^p(w)}\lesssim\|g\|_{L^p(w)},\qquad  (f,g)\in \F.$$
By  Lemma~\ref{lemma:Ap-wts}(a),  we can fix $q$ sufficiently large so
that $w\in A_q\cap RH_{(\frac{q_0}{p})'}$.  Fix
$0<s_0<\min(\frac{p}{q},p_0)$.  Then by assumption, for all $v \in 
A_{\frac{p_0}{s_0}}\cap RH_{(\frac{q_0}{p_0})'}\subset  RH_{(\frac{q_0}{p_0})'}$, 
$$\|f\|_{L^{p_0}(v)}\lesssim\|g\|_{L^{p_0}(v)} \qquad (f,g)\in \F. $$
Therefore, the hypotheses of Theorem~\ref{chemaextrap} are satisfied,
and since we have that $w\in A_{\frac{p}{s_0}}\cap
RH_{(\frac{q_0}{p})'}$, the desired inequality holds.
%
\end{proof}

\section{Orlicz Reverse H\"older Extrapolation} 
\label{section:orlicz}

\subsection*{Young functions and Orlicz norms}
Here we gather the basic properties of Young functions and Orlicz
norms that we will use.  We follow~\cite{MR2797562}; for proofs see,
for example,~\cite{rao-ren91}.

A Young function $\Phi :[0,\infty)\to[0,\infty) $ is a convex,
increasing, continuous function such that $\Phi(0) = 0$ and
\[ \lim_{t\rightarrow \infty} \frac{\Phi(t)}{t} = \infty. \]
Young functions are sometimes normalized so that $\Phi(1) = 1$; doing so
simplifies the constants that appear below.  

Given a Young function $\Phi$,  the complementary Young function
$\bar{\Phi}$ is defined by
 \[\bar{\Phi}(s) = \sup_t\{st-\Phi(t)\}\]
If $\Phi(t)=t^p$, $p>1$, then $\bar{\Phi}(t)=t^{p'}$.  More generally, 
\[\Phi^{-1}(t)\bar{\Phi}^{-1}(t) \approx t. \]
Another important example are Orlicz functions of the form
$ \Phi(t) =t^p\log(e+t)^{p-1+\delta}$, $\delta>0$.  In this case we
have that
\[ \bar{\Phi}(t) \approx \frac{t^{p'}}{\log(e+t)^{1+(p'-1)\delta}}. \]

Given a Young function $\Phi$ and a cube $Q$, we define the localized
Orlicz norm
\[ \|f\|_{\Phi, Q} = \inf\bigg\{ \lambda > 0 :
\avgint_Q \Phi\bigg(\frac{|f(x)|}{\lambda}\bigg)\,dx \leq 1
\bigg\}. \]
If $\Phi(t)=t^p$, then we get the localized Lebesgue norm:
\[ \|f\|_{\Phi, Q} = \left(\avgint_Q
  |f|^p\,dx\right)^{\frac{1}{p}}. \]
These norms form an increasing scale:  more precisely, given Young
functions $\Phi$ and $\Psi$, if $\Phi(t) \lesssim \Psi(ct)$, then
\[ \|f\|_{\Phi,Q} \lesssim \|f\|_{\Psi,Q}, \]
with a constant independent of $Q$.

This norm satisfies versions of H\"older's inequality:  given a
Young function $\Phi$ and a cube $Q$,
\[ \avgint_Q |fg|\,dx \leq 2\|f\|_{\Phi,Q}\|g\|_{\bar{\Phi},Q}. \]
More generally, given Young functions $\Phi,\,\Psi,\,\Theta$ such that
$\Phi^{-1}(t)\Psi(t)^{-1} \lesssim \Theta^{-1}(t)$, then
\[ \|fg\|_{\Theta, Q} \leq 2\|f\|_{\Phi,Q}\|g\|_{\Psi,Q}. \]

We now define two growth conditions on Young functions.  
The first bounds a Young function from below.   Given a Young function $\Psi$ and $a>1$, we say that $\Psi$ is an
$a$-Young function if  $\Psi_a(t)=\Psi(t^{\frac{1}{a}})$ is a Young function.
In this case, $t^a \lesssim \Psi(t)$ for large $t$.  
The second condition bounds a Young function from above.  Given $1<p<\infty$, a Young function $\Phi$ satisfies the $B_p$
condition, denoted by $\Phi \in B_p$, if
$$\int_1^\infty\frac{\Psi(t)}{t^p}\,\frac{dt}{t}<\infty.$$
In this case we have $\Psi(t) \lesssim t^p$ for large $t$.  This condition was introduced by 
P\'erez~\cite{perez95} to study the Orlicz maximal operators
$$M_\Psi f(x)=\sup_{Q\ni x}\|f\|_{\Psi,Q}.$$
He proved the following $L^p$ estimate.  

\begin{theorem} \label{thm:orlicz-max}
For all $1<p<\infty$, $\|M_\Psi f\|_{L^p}\lesssim\|f\|_{L^p}$ if and only if $\Psi\in B_p$.
\end{theorem}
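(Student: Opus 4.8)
The content of the theorem is the sufficiency, $\Psi\in B_p\implies\|M_\Psi f\|_{L^p}\lesssim\|f\|_{L^p}$; the converse is a testing argument sketched at the end. My plan for sufficiency is a Calder\'on--Zygmund stopping-time argument in which the hypothesis $\Psi\in B_p$ is invoked only at the very last step, to force convergence of a single integral. First I reduce to the dyadic Orlicz maximal operator $M^\Delta=M_\Psi^\Delta$ (supremum over dyadic cubes): by the one-third trick every cube $Q$ sits inside a cube $Q'$ from one of $3^n$ fixed dyadic grids with $|Q'|\le 3^n|Q|$, and since $\Psi$ is convex with $\Psi(0)=0$ one has $\Psi(t/c)\le\Psi(t)/c$ for $c\ge1$, which yields $\|f\|_{\Psi,Q}\le 3^n\|f\|_{\Psi,Q'}$ --- note this uses only convexity, not a $\Delta_2$ hypothesis on $\Psi$. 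Hence $M_\Psi f\lesssim_n\max_i M_\Psi^{\Delta_i}f$ and it is enough to bound a single $M^\Delta$; by a routine truncation we may also take $f\in L^\infty_c$, so that the cubes below are genuinely maximal.

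Fix $\lambda>0$, let $\Omega_\lambda=\{M^\Delta f>2\lambda\}$, and let $\{Q_j\}$ be the maximal dyadic cubes with $\|f\|_{\Psi,Q_j}>2\lambda$; these are pairwise disjoint and $\Omega_\lambda=\bigcup_j Q_j$. The key local fact is a truncation lemma: if $\|f\|_{\Psi,Q}>2\lambda$ then $\|f\chi_{\{f>\lambda\}}\|_{\Psi,Q}>\lambda$. To see it, split $f=f\chi_{\{f\le\lambda\}}+f\chi_{\{f>\lambda\}}$; on the first piece $\Psi(f/(2\lambda))\le\Psi(1/2)\le\tfrac12$ (normalizing $\Psi(1)=1$), while on the second piece, using $\lambda'\le\lambda$ and convexity, $\Psi(f/(2\lambda))\le\tfrac12\Psi(f/\lambda')$ with $\lambda'=\|f\chi_{\{f>\lambda\}}\|_{\Psi,Q}$ assumed $\le\lambda$; adding the two averages gives $\avgint_Q\Psi(f/(2\lambda))\le1$, contradicting $\|f\|_{\Psi,Q}>2\lambda$. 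Applying this on each $Q_j$ yields $|Q_j|<\int_{Q_j}\Psi(f\chi_{\{f>\lambda\}}/\lambda)\,dx$, and summing over the disjoint $Q_j$,
\[ |\Omega_\lambda|<\int_{\{f>\lambda\}}\Psi\Big(\frac{f}{\lambda}\Big)\,dx=|\{f>\lambda\}|+\int_1^\infty\Psi'(s)\,|\{f>\lambda s\}|\,ds, \]
the equality being the layer-cake formula ($\Psi(u)=\int_0^u\Psi'$) split at $s=1$ and using $\Psi(1)=1$. (The right side is finite because $\Psi\in B_p$ forces $\Psi(t)\lesssim t^p$ for large $t$.)

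Now multiply by $p\lambda^{p-1}$ and integrate over $\lambda\in(0,\infty)$: the left side becomes $2^{-p}\|M^\Delta f\|_{L^p}^p$, the first term on the right gives $\|f\|_{L^p}^p$, and in the second, Tonelli together with the substitution $\mu=\lambda s$ produces $\|f\|_{L^p}^p\int_1^\infty\Psi'(s)\,s^{-p}\,ds$. Finally, since $\Psi$ is convex and increasing, $\Psi'(s)\le\Psi(2s)/s$, so $\int_1^\infty\Psi'(s)s^{-p}\,ds\lesssim_p\int_1^\infty\frac{\Psi(u)}{u^p}\,\frac{du}{u}$, which is finite exactly when $\Psi\in B_p$; this gives $\|M^\Delta f\|_{L^p}\lesssim_{n,p}\|f\|_{L^p}$ and hence the sufficiency. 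For the necessity I would argue by contradiction: if $\int_1^\infty\Psi(t)t^{-p-1}\,dt=\infty$, a function $f\in L^p$ assembled from a superposition of normalized dilates (or concentric annular pieces) of $\chi_{Q_0}$ with amplitudes tuned to this divergence has $M_\Psi f\notin L^p$; one uses that for $f$ supported in the unit cube $Q_0$ and $R$ large, $M_\Psi f\ge\|f\|_{\Psi,Q(0,R)}$ on $Q(0,R)$ and $\|f\|_{\Psi,Q(0,R)}\ge\lambda$ once $R^n\le\int_{Q_0}\Psi(f/\lambda)\,dx$, which makes $\|M_\Psi f\|_{L^p}^p$ at least a multiple of $\big(\int_1^T\Psi(t)t^{-p-1}\,dt\big)\|f\|_{L^p}^p$ with $T\to\infty$, contradicting boundedness.

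The heart of the argument is the truncation lemma: it is what lets us trade the non-local Luxemburg norm for the honest integral $\int\Psi(f/\lambda)$ over a level set, after which everything is Tonelli and a change of variables and the $B_p$ integral appears of its own accord. Two points I would be careful about are that the dyadic reduction goes through for a general, possibly non-$\Delta_2$, Young function, and that the truncation to $f\in L^\infty_c$ is made rigorous so the stopping cubes exist. On the necessity side the delicate part is engineering the profile of the test function so that its membership in $L^p$ is governed by precisely the integral $\int_1^\infty\Psi(t)t^{-p-1}\,dt$ whose convergence we want to conclude.
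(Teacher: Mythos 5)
The paper does not prove Theorem~\ref{thm:orlicz-max}: it is stated as a known result of P\'erez~\cite{perez95} (with the doubling hypothesis removed by Liu and Luque~\cite{MR3256181}), so there is no in-paper proof to compare against. Judged on its own, your argument for sufficiency is correct and is essentially the Calder\'on--Zygmund stopping-time proof from the cited literature: the dyadic reduction via the one-third trick (valid for any Young function, as you note, since only convexity is used), the truncation lemma at the stopping cubes, the resulting level-set estimate $|\Omega_\lambda|\le\int_{\{f>\lambda\}}\Psi(f/\lambda)\,dx$, and the Tonelli/rescaling computation producing $\int_1^\infty\Psi'(s)s^{-p}\,ds$, controlled by the $B_p$ integral via $\Psi'(s)\le\Psi(2s)/s$, all check out. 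Two small points worth making explicit: in the truncation lemma you need $\lambda':=\|f\chi_{\{f>\lambda\}}\|_{\Psi,Q}>0$, which holds because $\|f\|_{\Psi,Q}>2\lambda$ forces $|\{f>\lambda\}\cap Q|>0$, and you should record that $\avgint_Q\Psi\bigl(f\chi_{\{f>\lambda\}}/\lambda'\bigr)\,dx\le 1$ (the Luxemburg infimum satisfies this by monotone convergence for continuous $\Psi$), which is what your splitting argument quietly uses. The normalization $\Psi(1)=1$ is harmless: replacing $\Psi$ by $\Psi(c\,\cdot)$ with $\Psi(c)=1$ rescales both $\|\cdot\|_{\Psi,Q}$ and $M_\Psi$ by the fixed factor $c$ and preserves $B_p$. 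Your necessity argument is in the right spirit (test functions supported in a fixed cube together with the growth of $\|f\|_{\Psi,Q(0,R)}$ as $R\to\infty$) but as written it is a plan rather than a proof; to close it you need to actually build the profile --- e.g.\ take $f(x)=\sum_k a_k\chi_{E_k}$ with $|E_k|=2^{-k}$ and amplitudes $a_k$ chosen so that $f\in L^p$ while $\int_{Q_0}\Psi(f/\lambda)\,dx$ diverges at the rate dictated by $\int_1^\infty\Psi(t)t^{-p-1}\,dt$ --- and then verify the claimed lower bound on $\|M_\Psi f\|_{L^p}$.
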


 \begin{remark}
In~\cite{perez95} the statement of this result contained the further
hypothesis that $\Psi$ was doubling; however, this was shown to be
superfluous by Liu and Luque~\cite{MR3256181} (see also \cite{Anderson}).  
 \end{remark}

\medskip

Finally, we have that the generalized reverse H\"older class $RH_\Phi$
is contained in the scale of $A_p$ weights.  More precisely, we have the
following lemma that was proved in~\cite{HSV} in a slightly different
form.  For the convenience of the reader we repeat the short proof.  

\begin{lemma} \label{lemma:Ainfty}
Given any Young function $\Phi$, if $w \in RH_\Phi$, then $w\in
A_\infty$.  
\end{lemma}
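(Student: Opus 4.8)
Lemma~\ref{lemma:Ainfty} asserts that $w \in RH_\Phi$ implies $w \in A_\infty$. The plan is to reduce to the classical fact from Lemma~\ref{lemma:Ap-wts}(a) that $A_\infty$ is exactly the union of the $RH_s$ classes for $s > 1$, by showing that the hypothesis $w \in RH_\Phi$ forces $w$ into some genuine power-scale reverse H\"older class $RH_s$. The bridge is the $a$-Young function notion recalled just above: since $\Phi$ is a Young function, $\Phi(t)/t \to \infty$, so $\Phi$ grows faster than linearly, and in fact there is some $a>1$ for which $\Phi$ is an $a$-Young function, meaning $t^a \lesssim \Phi(t)$ for large $t$. (One can always extract such an $a$: a convex function with superlinear growth that vanishes at the origin dominates a nontrivial power near infinity.)

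First I would fix a cube $Q$ and compare the Lebesgue average of $w^a$ with the Orlicz norm $\|w\|_{\Phi,Q}$. Because $t^a \lesssim \Phi(t)$ for $t$ large, and near the origin we may as well absorb things into constants (or normalize $\Phi(1)=1$ and use convexity), we get $\|f\|_{a,Q} \lesssim \|f\|_{\Phi,Q}$ for all $f$; this is precisely the ``increasing scale of Orlicz norms'' statement quoted in the excerpt, applied with $\Psi = \Phi$ and the power function $t^a$. Applying this with $f = w$ yields
\[
\left(\avgint_Q w^a\,dx\right)^{1/a} = \|w\|_{a,Q} \lesssim \|w\|_{\Phi,Q} \leq [w]_{RH_\Phi}\avgint_Q w\,dx,
\]
where the last inequality is just the definition of $RH_\Phi$. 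Taking the supremum over all cubes $Q$ shows $w \in RH_a$ with $[w]_{RH_a} \lesssim [w]_{RH_\Phi}$.

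Finally, $a > 1$, so by Lemma~\ref{lemma:Ap-wts}(a) (``$w\in A_\infty$ iff $w \in RH_s$ for some $s>1$'') we conclude $w \in A_\infty$, which is exactly the claim.

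The only genuinely delicate point is the first step: justifying that every Young function $\Phi$ is an $a$-Young function for some $a>1$, i.e.\ that $\Phi$ dominates a nontrivial power at infinity. This follows because convexity plus $\Phi(0)=0$ gives $\Phi(t)/t$ nondecreasing, and $\lim_{t\to\infty}\Phi(t)/t = \infty$ then lets us pick $t_0$ with $\Phi(t_0)/t_0 \geq 1$ (after normalizing), whence convexity propagates a power-type lower bound; if one prefers to avoid asking $\Phi_a = \Phi(t^{1/a})$ to be literally a Young function, it suffices to have the pointwise bound $t^a \lesssim \Phi(t)$ for large $t$, which is all the Orlicz-norm comparison actually uses. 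Everything else is a direct chain of the inequalities recalled in this section, so the proof is short.
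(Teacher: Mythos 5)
Your proof hinges on the claim that every Young function $\Phi$ dominates a nontrivial power, i.e.\ that there exists $a>1$ with $t^a\lesssim\Phi(t)$ for large $t$, so that $RH_\Phi\subset RH_a$. This claim is false. A Young function need only satisfy $\lim_{t\to\infty}\Phi(t)/t=\infty$, which is strictly weaker than power-type growth. Take $\Phi(t)=t\log(e+t)$: this is convex, increasing, vanishes at $0$, and $\Phi(t)/t=\log(e+t)\to\infty$, so it is a perfectly good Young function; yet for every $a>1$ we have $\Phi(t)/t^a=\log(e+t)/t^{a-1}\to 0$, so $t^a\lesssim\Phi(t)$ fails for every $a>1$. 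Convexity together with $\Phi(0)=0$ only gives that $\Phi(t)/t$ is nondecreasing, which yields the linear bound $\Phi(t)\gtrsim t$; it does not ``propagate'' to a bound $\Phi(t)\gtrsim t^a$ for $a>1$. So the inclusion $RH_\Phi\subset RH_a$ that you rely on is simply unavailable in general, and the appeal to Lemma~\ref{lemma:Ap-wts}(a) has nothing to stand on.

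The paper's argument avoids exactly this trap by using the $(\alpha,\beta)$ characterization of $A_\infty$ from Lemma~\ref{lemma:Ap-wts}(b), which requires only the qualitative superlinear growth $\lim_{t\to\infty}\Phi(t)/t=\infty$. There one fixes $E\subset Q$, computes $\|\chi_E\|_{\bar\Phi,Q}$ explicitly, applies the generalized H\"older inequality together with the $RH_\Phi$ condition to get
\[
\frac{w(E)}{w(Q)}\lesssim \frac{|E|}{|Q|}\,\Phi^{-1}\!\left(\frac{|Q|}{|E|}\right),
\]
and then uses $s/\Phi^{-1}(s)\to\infty$ to make the right side small when $|E|/|Q|$ is small. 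If you want to salvage a route through $RH_s$, you would need an additional hypothesis on $\Phi$ (such as $\Phi$ being an $a$-Young function for some fixed $a>1$); as stated, the lemma must be proved via the $(\alpha,\beta)$ condition or something equally robust.
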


\begin{proof}
Fix a cube $Q$ and a measurable set $E\subset Q$.  We first estimate
the norm $\|\chi_E\|_{\bar{\Phi},Q}$.   Fix $\lambda>0$ such that
\[ \dashint_Q \bar{\Phi}\left(\frac{\chi_E(x)}{\lambda}\right)\,dx =
1.  \]
Then we have that 
\[ \|\chi_E\|_{\bar{\Phi},Q}= \lambda = \bar{\Phi}^{-1}\left(\frac{|Q|}{|E|}\right)^{-1}. \]
Since  $\Phi^{-1}(t)\bar{\Phi}^{-1}(t)\approx t$,  we get that
\[ \|\chi_E\|_{\bar{\Phi},Q} \simeq \frac{|E|}{|Q|}
\Phi^{-1}\left(\frac{|Q|}{|E|}\right). \]

We now estimate as follows:
\[ w(E) = |Q|\dashint_Q w\chi_E\,dx
\lesssim |Q|\|w\|_{\Phi,Q}\|\|\chi_E\|_{\bar{\Phi},Q}
\lesssim w(Q) \frac{|E|}{|Q|}
\Phi^{-1}\left(\frac{|Q|}{|E|}\right). \]

Since $\Phi$ is a Young function, we have that 
\[ \lim_{s\rightarrow \infty} \frac{s}{\Phi^{-1}(s)} = \lim_{t\rightarrow \infty} \frac{\Phi(t)}{t}=\infty. \]
Therefore, we can find $0<\alpha,\,\beta<1$ such that if
$|E|/|Q|<\alpha$, then $w(E)/w(Q) < \beta$.  Then by
Lemma~\ref{lemma:Ap-wts}(b), $w\in
A_\infty$.
\end{proof}

\subsection*{Extrapolation with generalized reverse H\"older weights}

We can now state and prove our main extrapolation theorems.  Our first
result yields unweighted inequalities.

\begin{theorem} \label{RHOrlicz} 
Given $p_0<q_0$ and $\Psi_0\in B_{(\frac{q_0}{p_0})'}$, suppose that for
all $w\in RH_{\Psi_0}$, 
\begin{equation} \label{eqn:RHOrlicz1}
\|f\|_{L^{p_0}(w)}\leq C\|g\|_{L^{p_0}(w)}, \qquad (f,g)\in \F .
\end{equation}
Then for all $0< p\leq q_0$,
\begin{equation} \label{eqn:RHOrlicz2}
\|f\|_{L^p}\leq C\|g\|_{L^p}, \qquad (f,g) \in \F. 
\end{equation}
If $p_0=q_0$, the same conclusion holds if we assume~\eqref{eqn:RHOrlicz1}
holds whenever $w\in RH_\infty$. 
\end{theorem}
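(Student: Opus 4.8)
The plan is to split the range of $p$ into three pieces and treat each with a Rubio de Francia type argument adapted to the scale $RH_\Psi$. The endpoint $p=p_0$ is immediate: the constant weight $1$ lies in $RH_{\Psi_0}$ (and in $RH_\infty$), so \eqref{eqn:RHOrlicz1} applied with $w\equiv 1$ already gives \eqref{eqn:RHOrlicz2} at $p=p_0$. For the rest we distinguish the ``upward'' range $p_0<p\le q_0$ and the ``downward'' range $0<p<p_0$; in both we fix $(f,g)\in\F$ and assume, as the running convention allows, that $\|f\|_{L^p}<\infty$ and $\|g\|_{L^p}<\infty$.

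In the upward range I would use duality at exponent $p/p_0>1$ together with an iteration built from the Orlicz maximal operator $M_{\Psi_0}$. Since $p\le q_0$ we have $(q_0/p_0)'\le (p/p_0)'$, hence $\Psi_0\in B_{(q_0/p_0)'}\subseteq B_{(p/p_0)'}$, and Theorem~\ref{thm:orlicz-max} makes $M_{\Psi_0}$ bounded on $L^{(p/p_0)'}$. Writing $\|f\|_{L^p}^{p_0}=\|f^{p_0}\|_{L^{p/p_0}}=\sup\{\int f^{p_0}h\,dx: h\ge0,\ \|h\|_{L^{(p/p_0)'}}\le1\}$ and fixing such an $h$, set $\mathcal{R}h=\sum_{k\ge0}\big(2\|M_{\Psi_0}\|_{L^{(p/p_0)'}}\big)^{-k}M_{\Psi_0}^k h$. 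Then $\mathcal{R}h\ge h$, $\|\mathcal{R}h\|_{L^{(p/p_0)'}}\le2$, and $M_{\Psi_0}(\mathcal{R}h)\le 2\|M_{\Psi_0}\|_{L^{(p/p_0)'}}\mathcal{R}h$; the last inequality gives $\|\mathcal{R}h\|_{\Psi_0,Q}\le 2\|M_{\Psi_0}\|_{L^{(p/p_0)'}}\essinf_Q\mathcal{R}h$ for every cube $Q$, so $\mathcal{R}h\in RH_{\Psi_0}$ with characteristic controlled by $\|M_{\Psi_0}\|_{L^{(p/p_0)'}}$, and $\mathcal{R}h\in A_\infty$ by Lemma~\ref{lemma:Ainfty}, so it is a genuine weight. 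Then
\[ \int f^{p_0}h\,dx\le\int f^{p_0}\,\mathcal{R}h\,dx\lesssim\int g^{p_0}\,\mathcal{R}h\,dx\le\|g^{p_0}\|_{L^{p/p_0}}\|\mathcal{R}h\|_{L^{(p/p_0)'}}\le 2\|g\|_{L^p}^{p_0}, \]
where the middle step is \eqref{eqn:RHOrlicz1} with $w=\mathcal{R}h$ (legitimate since $\int f^{p_0}\mathcal{R}h\,dx\le 2\|f\|_{L^p}^{p_0}<\infty$). Taking the supremum over $h$ yields \eqref{eqn:RHOrlicz2} for this $p$.

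In the downward range the previous scheme breaks down, and this is the crux. I would first reduce to $p>1$: replacing $\F$ by $\{(f^{p/2},g^{p/2}):(f,g)\in\F\}$ preserves all the hypotheses (with $(p_0,q_0)$ scaled by $2/p$, so that $\Psi_0\in B_{(q_0/p_0)'}$ is unchanged) and turns the target exponent into $2$; so we may assume $1<p<p_0$. Now I would not iterate a test function but build the weight from $g$ directly. By Lemma~\ref{lemma:Ap-wts}(f), $(Mg)^r\in A_1$ for $0<r<1$ with a bound independent of $g$, hence by Lemma~\ref{lemma:Ap-wts}(e), $(Mg)^{-a}\in RH_\infty$ for every $a>0$, again uniformly; and $RH_\infty\subseteq RH_{\Psi_0}$, since for $w\in RH_\infty$ one has $\|w\|_{\Psi_0,Q}\le\frac{1}{\Psi_0^{-1}(1)}\esssup_Q w\le\frac{[w]_{RH_\infty}}{\Psi_0^{-1}(1)}\avgint_Q w$. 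Take $W=(Mg)^{p-p_0}\in RH_\infty\subseteq RH_{\Psi_0}$. H\"older's inequality with exponents $p_0/p$ and $(p_0/p)'$ gives
\[ \|f\|_{L^p}^p=\int f^p W^{p/p_0}W^{-p/p_0}\,dx\le\Big(\int f^{p_0}W\,dx\Big)^{p/p_0}\Big(\int W^{-p/(p_0-p)}\,dx\Big)^{(p_0-p)/p_0}. \]
Since $g\le Mg$ a.e., $\int g^{p_0}W\,dx=\int g^p(g/Mg)^{p_0-p}\,dx\le\int g^p\,dx<\infty$, so \eqref{eqn:RHOrlicz1} with $w=W$ bounds $\int f^{p_0}W\,dx$ by a constant times $\int g^p\,dx$ (and in particular keeps it finite); and $\int W^{-p/(p_0-p)}\,dx=\int (Mg)^p\,dx\lesssim\int g^p\,dx$ because $p>1$ and $M$ is bounded on $L^p$. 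Combining, $\|f\|_{L^p}^p\lesssim\int g^p\,dx=\|g\|_{L^p}^p$, which is \eqref{eqn:RHOrlicz2}. This range uses only $RH_\infty$ weights, which is precisely why the conclusion persists when $p_0=q_0$ under the weaker hypothesis: then $0<p\le q_0=p_0$ is exactly the downward range together with $p=p_0$.

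The step I expect to be the main obstacle is the downward range: one must realize that no Rubio de Francia iteration on the dual function is available there (the relevant dual exponent need not be one on which $M_{\Psi_0}$ is bounded) and replace it by the explicit weight $(Mg)^{p-p_0}\in RH_\infty$, after the preliminary rescaling that makes $M$ bounded on $L^p$. The only other recurring nuisance is the bookkeeping of finiteness of the auxiliary weighted norms, handled uniformly by the standing convention on families of pairs and by the fact, used above, that the corresponding norm of $g$ is already finite.
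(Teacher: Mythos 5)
Your argument is correct and follows essentially the same route as the paper's: in the upward range $p_0<p\le q_0$ you use duality plus a Rubio de Francia iteration built from $M_{\Psi_0}$, whose boundedness on $L^{(p/p_0)'}$ follows from the nesting of $B_p$ classes, to produce a majorant in $RH_{\Psi_0}$; in the downward range you build the weight from $g$ via a negative power of a Hardy--Littlewood maximal function, land it in $RH_\infty\subset RH_{\Psi_0}$ through Lemma~\ref{lemma:Ap-wts}(e,f), and close with H\"older and the pointwise bound $g\le Mg$. The only cosmetic difference is that in the downward range the paper keeps $0<p<p_0$ and uses $M(g^{1/r})$ with $r>1/p$ so that $M$ is bounded on $L^{pr}$, whereas you pre-rescale the family $(f,g)\mapsto(f^{p/2},g^{p/2})$ to reduce to $p>1$ and then use $M(g)$ directly; these are the same idea packaged differently, and your observation that this range uses only $RH_\infty$ weights correctly explains the $p_0=q_0$ endpoint statement.
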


\medskip

Theorem~\ref{RHOrlicz} is actually a consequence of
Theorem~\ref{RHextrap}.  Since $\Psi_0 \in
B_{(\frac{q_0}{p_0})'}$, $\Psi_0(t) \lesssim t^{(\frac{q_0}{p_0})'}$
for $t\geq 1$.  Therefore, if $w\in RH_{(\frac{q_0}{p_0})'}$, 
\[ \|w\|_{\Psi_0,Q} \lesssim \|w\|_{(\frac{q_0}{p_0})', Q} \lesssim
\|w\|_{1,Q}, \]
and so $w\in RH_{\Psi_0}$.    Thus~\eqref{eqn:RHOrlicz1} implies
that~\eqref{eqn:RHExtrapol1} holds, and so
by Theorem~\ref{RHextrap}, for $0<p<q_0$, \eqref{eqn:RHExtrapol2}
holds.  If we take $w=1$ we get \eqref{eqn:RHOrlicz2}.

Despite this, here we give a direct proof.  We do so for two reasons.
First, our proof is ultimately simpler, since it avoids limited
range extrapolation which itself is nontrivial to prove.  Second, as
we noted in the Introduction, our
proof makes clear the main ideas that will be used
in the proof of Theorem~\ref{thm:wtdOrlicz}, which is not a
consequence of Theorem~\ref{chemaextrap}.

\begin{proof} 
We first consider the case $p_0< p\leq q_0$ (the case $p=p_0$ is
obvious by taking $w=1$).  By duality we have that
$$\|f\|_{L^p}=\| |f|^{p_0} \|_{L^{\frac{p}{p_0}}}^{\frac{1}{p_0}}=\Big(\int
|f|^{p_0} h\,dx\Big)^{\frac{1}{p_0}}$$
for some $h\in L^{(\frac{p}{p_0})'}$ with norm one.   Moreover, since the $B_p$ classes are nested we have 
$$\Psi_0\in B_{(\frac{q_0}{p_0})'}\subset B_{(\frac{p}{p_0})'}$$
since $p\leq q_0$.  Hence
$$M_{\Psi_0}:L^{(\frac{p}{p_0})'}\ra L^{(\frac{p}{p_0})'}.$$

We now define a Rubio de Francia iteration algorithm: 
$$\Rf  h=\sum_{k=0}^\infty \frac{M_{\Psi_0}^k h}{2^k\|M_{\Psi_0}\|^k_{L^{(\frac{p}{p_0})'}}},$$
where $M^0_{\Psi_0} h = |h|$.  The operator $\Rf $ satisfies the following properties:
\begin{enumerate}
\item $|h|\leq \Rf h$,
\item $\|\Rf h\|_{L^{(\frac{p}{p_0})'}}\leq 2\|h\|_{L^{(\frac{p}{p_0})'}}$,
\item $M_{\Psi_0}(\Rf h)\leq C\Rf h,$
\item $\Rf h\in RH_{\Psi_0}$.
\end{enumerate}
The first three points are standard:  cf.~\cite{MR2797562}.  To prove
the final point, fix a cube $Q$.  Then we have that
$$\|\Rf h\|_{{\Psi_0},Q}\leq \dashint_Q M_{\Psi_0}(\Rf h)\leq C\dashint_Q \Rf h.$$

We can now estimate as follows:
\begin{align*}
\Big(\int |f|^{p_0} h\,dx\Big)^{\frac{1}{p_0}}
&\leq \Big(\int |f|^{p_0} \Rf h\,dx\Big)^{\frac{1}{p_0}}\\
&\leq C\Big(\int |g|^{p_0} \Rf h\,dx\Big)^{\frac{1}{p_0}} \qquad (\text{since} \ \Rf h\in RH_{\Psi_0})\\
&\leq  C\Big(\int |g|^{p}\Big)^{\frac{1}{p}}\Big(\int \Rf h^{(\frac{p}{p_0})'}\,dx\Big)^{\frac{1}{p_0(\frac{p}{p_0})'}}\\
&\leq 2^{\frac{1}{p_0}}C\Big(\int |g|^{p}\,dx\Big)^{\frac{1}{p}}.
\end{align*}

\bigskip We now consider the case when $0<p<p_0\leq q_0$.  This case
is much simpler and only relies on the maximal operator.  Fix
$r>\frac{1}{p}$ and define
$H= M(g^{\frac{1}{r}})^{\frac{pr}{(p_0/p)'}}$.  Then
$H^{-p_0/p}=M(g^{\frac{1}{r}})^{-a}$ for $a>0$, and so by
Lemma~\ref{lemma:Ap-wts}(e,f),
$H^{-p_0/p}\in RH_\infty\subset RH_{\Psi_0}$.

We can now estimate as follows: by our hypothesis and since the
maximal operator is bounded on $L^{pr}$,
\begin{align*}
\|f\|_{L^p}^p
& = \int_{\R^n} f^p H^{-1}H\,dx \\
& \leq \left( \int_{\R^n} f^{p_0}H^{-p_0/p}\,dx\right)^{p/p_0}
\left(\int_{\R^n} H^{(p_0/p)'}\,dx\right)^{1/(p_0/p)'} \\
& \lesssim \left( \int_{\R^n} g^{p_0}H^{-p_0/p}\,dx\right)^{p/p_0}
\left(\int_{\R^n} M(g^{\frac{1}{r}})^{pr}\,dx \right)^{1/(p_0/p)'} \\
& \leq \left( \int_{\R^n} g^{p_0}
  (g^{\frac{1}{r}})^{r(p-p_0)}\,dx\right)^{p/p_0} 
\left(\int_{\R^n} g^p\,dx \right)^{1/(p_0/p)'} \\
& = \int _{\R^n} g^p\,dx.
\end{align*}

\end{proof}

Now we state and prove Theorem~\ref{thm:wtdOrlicz}, the more general version of Theorem-\ref{thm:wtdOrlicz-special}.
 
\begin{theorem} \label{thm:wtdOrlicz}
Given $0<p_0 <q_0$, suppose that for a fixed $\Psi_0 \in
B_{(\frac{q_0}{p_0})'}$ and all $w \in RH_{\Psi_0}$, 
\begin{equation} \label{eqn:wtdOrlicz1} 
 \|f\|_{L^{p_0}(w)} \lesssim \|g\|_{L^{p_0}(w)}, 
\qquad (f,g) \in \F. 
\end{equation}
If $p_0=q_0$, suppose~\eqref{eqn:wtdOrlicz1} holds for any $w\in RH_\infty$.
If either of the following hold:

\begin{enumerate}

\item $p_0<p<q_0$ and 
$w\in RH_\Psi$, where $\Psi$ is defined by $\Psi_0(t)=\Psi(t^r)$ with $r=\frac{(q_0/p_0)'}{(q_0/p)'} <1$;

\item $p=q_0$ and $w\in RH_\infty$;

\end{enumerate}
then we have that
\begin{equation} \label{eqn:extrapol-concl}
\|f\|_{L^{p}(w)} \lesssim \|g\|_{L^{p}(w)}, 
\qquad (f,g) \in \F. 
\end{equation}
\end{theorem}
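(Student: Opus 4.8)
The plan is to reproduce, in the weighted setting, the Rubio de Francia iteration that proves Theorem~\ref{RHOrlicz}, the one new feature being that the weight $w$ must be transported through the argument by the generalized Orlicz H\"older inequality. I will treat case~(1) in detail; case~(2) is an easier variant. Fix $p_0<p<q_0$ and $w\in RH_\Psi$, and set $\sigma=p/p_0>1$. Duality in $L^\sigma(w\,dx)$ produces a nonnegative $h$ with $\|h\|_{L^{\sigma'}}=1$ such that
\[
\|f\|_{L^p(w)}^{p_0}=\big\||f|^{p_0}\big\|_{L^\sigma(w)}=\int_{\R^n}|f|^{p_0}\,w^{p_0/p}\,h\,dx .
\]
It suffices to produce a weight $v=w^{p_0/p}u\in RH_{\Psi_0}$ with $u\ge h$ and $\|u\|_{L^{\sigma'}}\lesssim1$: applying \eqref{eqn:wtdOrlicz1} to $v$ and then H\"older's inequality with exponents $\sigma,\sigma'$ yields $\|f\|_{L^p(w)}^{p_0}\le\int|f|^{p_0}v\lesssim\int|g|^{p_0}v\le\|g\|_{L^p(w)}^{p_0}\|u\|_{L^{\sigma'}}\lesssim\|g\|_{L^p(w)}^{p_0}$, which is the claim.

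To build $u$, put $D(t)=\Psi(t^{p/p_0})$, $\kappa=\tfrac1r-\tfrac{p_0}{p}>0$, and note the identity $\tfrac1r=\kappa+\tfrac{p_0}{p}$ and $\kappa(p/p_0)'=(q_0/p)'$, both immediate from $r=(q_0/p_0)'/(q_0/p)'$. A change of variables in the integral defining the $B_s$ classes turns the hypothesis $\Psi_0\in B_{(q_0/p_0)'}$ into $\Psi\in B_{(q_0/p)'}$, hence $\Psi(t^{1/\kappa})\in B_{(p/p_0)'}$. Choose a Young function $\Xi\in B_{(p/p_0)'}$ with $\Xi(t)\gtrsim\Psi(t^{1/\kappa})$ for large $t$; since $D^{-1}(t)=(\Psi^{-1}(t))^{p_0/p}$ and $\Psi_0^{-1}(t)=(\Psi^{-1}(t))^{1/r}$, the identity $\tfrac1r=\kappa+\tfrac{p_0}{p}$ makes $D^{-1}(t)\Xi^{-1}(t)\lesssim\Psi_0^{-1}(t)$, so $D,\Xi,\Psi_0$ are admissible for the generalized H\"older inequality; and $\Xi\in B_{(p/p_0)'}=B_{\sigma'}$ gives, via Theorem~\ref{thm:orlicz-max}, the boundedness of $M_\Xi$ on $L^{\sigma'}$. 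Let $u=\sum_{k\ge0}M_\Xi^{\,k}h\,/\,\bigl(2^k\|M_\Xi\|_{L^{\sigma'}}^k\bigr)$; then $u\ge h$, $\|u\|_{L^{\sigma'}}\le2$, and $M_\Xi u\lesssim u$ a.e., the last giving $\|u\|_{\Xi,Q}\lesssim\essinf_Q u$ for all cubes $Q$. Using generalized H\"older, the scaling $\|w^{p_0/p}\|_{D,Q}=\|w\|_{\Psi,Q}^{p_0/p}$, the hypothesis $w\in RH_\Psi$, and the reverse Jensen inequality $(\avgint_Q w)^{p_0/p}\lesssim\avgint_Q w^{p_0/p}$ (valid since $w\in RH_\Psi\subseteq A_\infty$ by Lemma~\ref{lemma:Ainfty}), one gets for every $Q$
\[
\|v\|_{\Psi_0,Q}\lesssim\|w^{p_0/p}\|_{D,Q}\,\|u\|_{\Xi,Q}\lesssim\Big(\avgint_Q w^{p_0/p}\Big)\essinf_Q u\le\avgint_Q v ,
\]
so $v\in RH_{\Psi_0}$, finishing case~(1). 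For case~(2) ($p=q_0$, $w\in RH_\infty$) one runs the same scheme with $D$ the $L^\infty$ norm and $\Xi=\Psi_0$ (here $\Psi_0\in B_{(q_0/p_0)'}$ is exactly what is needed), replacing the use of $RH_\Psi\cap A_\infty$ for $w$ by $w^{p_0/q_0}\in RH_\infty$ (Lemma~\ref{lemma:Ap-wts}(d)); if $p_0=q_0$ the assumed inequality on $RH_\infty$ is already the conclusion.

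The crux is the choice of $\Xi$: it must be large enough for the H\"older relation $D^{-1}\Xi^{-1}\lesssim\Psi_0^{-1}$ yet small enough to remain in $B_{(p/p_0)'}$, and that both are possible is exactly the information carried by the scaling hypothesis $\Psi_0(t)=\Psi(t^r)$, $r=(q_0/p_0)'/(q_0/p)'$, via $\Psi_0\in B_{(q_0/p_0)'}\iff\Psi\in B_{(q_0/p)'}$ and $\tfrac1r=\kappa+\tfrac{p_0}{p}$. I expect the genuinely delicate point to be the borderline regime $\kappa>1$ in which $\Psi(t^{1/\kappa})$ is only sublinear and hence not comparable to a Young function: there $\Xi$ must instead be taken barely superlinear (still in $B_{(p/p_0)'}$), and one must check that the H\"older relation persists because $\Psi_0^{-1}$ supplies the matching power of $t$ together with an extra logarithmic factor to spare. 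The only other nontrivial input is the reverse Jensen inequality for $A_\infty$ weights, which is what converts the $RH_\Psi$ bound on $w$ into the $RH_D$-type bound on $w^{p_0/p}$ used above.
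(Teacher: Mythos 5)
Your argument is, in substance, the paper's: the same duality reduction, the same factored Young functions ($D$ here is their $C$, $\Xi$ plays the role of their $B(t)=\Psi(t^s)$ with $s=1/\kappa$), the same Rubio de Francia iteration with $M_\Xi$, the same exact identity $D^{-1}B^{-1}=\Psi_0^{-1}$, and the same use of rescaling plus $w^{p_0/p}\in RH_{p/p_0}$ (what you call reverse Jensen) to verify property~(c). Case~(2) is handled identically. The one place you depart from the paper is that you do not take $\Xi=\Psi(t^{1/\kappa})$ directly but instead propose to find a Young function $\Xi\in B_{(p/p_0)'}$ with $\Xi(t)\gtrsim\Psi(t^{1/\kappa})$, and you flag the regime $\kappa>1$ as ``genuinely delicate'' on the grounds that $\Psi(t^{1/\kappa})$ might be sublinear and thus fail to be a Young function.

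That worry is unfounded, and recognizing why is the small idea your write-up is missing. Because $\Psi_0(t)=\Psi(t^r)$, one has $\Psi(t^s)=\Psi_0(t^{s/r})$, and the arithmetic $1/s=1/r-p_0/p$ together with $p_0<p<q_0$ forces $s/r>1$ in \emph{every} case: a direct computation gives $s/r=\frac{(q_0-p_0)p}{q_0(p-p_0)}$, and $s/r>1$ is equivalent to $p<q_0$. So $B(t)=\Psi(t^s)$ is a genuine Young function (a Young function precomposed with $t^a$, $a\ge1$), whether $s\ge1$ or $s<1$, and the change-of-variables computation you already invoke shows $B\in B_{(p/p_0)'}$. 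Thus one may simply take $\Xi=B$, and there is no borderline case. Moreover, your enlargement is potentially hazardous rather than merely redundant: since $B_{(p/p_0)'}$ is an \emph{upper} growth condition, replacing $B$ by a strictly larger $\Xi$ risks leaving $B_{(p/p_0)'}$, and you give no construction showing that a dominating Young function in the same $B_q$ class always exists. Since $B$ itself already has both properties, that existence question never needs to be answered.
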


\begin{remark}
Notice that as $p$ gets close to $q_0$, then
  $\frac{1}{r} \to \infty$, so the second case is a natural endpoint
  condition.
\end{remark}

\begin{remark}
  In Theorem~\ref{thm:wtdOrlicz} we are not able to prove weighted
  inequalities in the range $0<p<p_0\leq q_0$ analogous to the
  unweighted inequalities in Theorem \ref{RHOrlicz}.   Our proof in
  the unweighted case does not extend to the weighted setting.   This problem seems to be
  much more subtle and will require new techniques.
\end{remark}

\begin{proof}
The proof follows the same outline as the proof of
Theorem~\ref{RHOrlicz}, and we refer to that proof for some details
that are the same.  
We consider each case in turn.  

First suppose that $p_0<p<q_0$; by
duality there exists $h\in L^{(\frac{p}{p_0})'}$, $\|h\|_{L^{(\frac{p}{p_0})'}}=1$, such that
\[
 \|f\|_{L^p(w)}^{p_0} 
  = \left( \int_{\R^n} f^{p_0\frac{p}{p_0}}
  w^{\frac{p_0}{p}\frac{p}{p_0}}\,dx\right)^{\frac{p_0}{p}} \\
 = \int_{\R^n} f^{p_0} w^{\frac{p_0}{p}} h \,dx. 
\]

Since
$\Psi_0(t)=\Psi(t^r)$,  
$\Psi \in B_{(\frac{q_0}{p})'}$:   
by a change of variables we have that
\[ \int_1^\infty
\frac{\Psi(t)}{t^{(\frac{q_0}{p})'}}\frac{dt}{t}
\approx  \int_1^\infty \frac{\Psi_0(t)}{t^{(\frac{q_0}{p_0})'}}\frac{dt}{t} < \infty; \]
the last inequality holds by our assumption that $\Psi_0 \in
B_{(\frac{q_0}{p_0})'}$.  

Now suppose that we have a non-negative function $H$ that satisfies the
following conditions:
\begin{enumerate}

\item $h \leq H$;

\item $\|H\|_{L^{(\frac{p}{p_0})'}} \lesssim \|h\|_{L^{(\frac{p}{p_0})'}} = 1$.

\item $Hw^{\frac{p_0}{p}} \in RH_{\Psi_0}$;

\end{enumerate}
Then by our hypothesis and the properties of $H$ we can estimate as
follows:
\begin{align*}
\int_{\R^n} f^{p_0} w^{\frac{p_0}{p}} h \,dx
& \leq \int_{\R^n} f^{p_0} Hw^{\frac{p_0}{p}}  \,dx \\
& \lesssim \int_{\R^n} g^{p_0} Hw^{\frac{p_0}{p}}  \,dx \\
& \leq \|g^{p_0}w^{\frac{p_0}{p}}\|_{L^{\frac{p}{p_0}}}\|H\|_{L^{(\frac{p}{p_0})'}} \\
& \leq \|g\|_{L^p(w)}^{p_0}.
\end{align*}

\medskip

Therefore, to complete the argument for this case we need to construct
a function $H$ with the desired properties.  We first construct two
auxiliary Young functions.  Let $C(t)=\Psi(t^{\frac{p}{p_0}})$.  We
claim that $w^{\frac{p_0}{p}}\in RH_C$.  Indeed, by Lemma
\ref{lemma:Ainfty}, $w\in A_\infty$ and so by Lemma
\ref{lemma:Ap-wts}(c) we have that
$w^{\frac{p_0}{p}}\in RH_{\frac{p}{p_0}}$.  Therefore, by rescaling
the norm, we have that
\[ \|w^{\frac{p_0}{p}}\|_{C,Q} = \|w\|_{\Psi,Q}^{\frac{p_0}{p}} 
\lesssim \|w\|_{1,Q}^{\frac{p_0}{p}} \lesssim \|w^{\frac{p_0}{p}}\|_{1,Q}. \]

Now define $s>0$ by 
\[ \frac{1}{s} = \frac{1}{r}-\frac{p_0}{p}.  \]
If $1<\frac{1}{s}<\frac{1}{r}$, $\Psi(t^s) = \Psi((t^r)^{s/r}) = \Psi_0(t^{s/r})$, and $s/r>1$ so $B(t)=\Psi(t^s)$ is a Young function; on the other
hand, if $0<\frac{1}{s}\leq 1$ then $s\geq 1$ and $B$ is again a Young
function.  Moreover, in either case we have that $B\in B_{(\frac{p}{p_0})'}$ and hence $M_B$
is bounded on $L^{(\frac{p}{p_0})'}$.    To see this,
first note that 
\[ \frac{1}{s}\left(\frac{p}{p_0}\right)' 
= \left(\frac{q_0-p_0}{q_0-p}- \frac{p_0}{p}\right)\frac{p}{p-p_0} 
=\frac{q_0(p-p_0)}{p(q_0-p)} \frac{p}{p-p_0} = \frac{q_0}{q_0-p} 
= \left(\frac{q_0}{p}\right)'.
\]
Then by a change of variables and the fact that $\Psi\in
B_{(\frac{q_0}{p})'}$,
\[ \int_1^\infty \frac{B(t)}{t^{(\frac{p}{p_0})'}}\frac{dt}{t}
= \int_1^\infty \frac{\Psi(t^s)}{t^{(\frac{p}{p_0})'}}\frac{dt}{t}
\approx \int_1^\infty
\frac{\Psi(t)}{t^{(\frac{q_0}{p})'}}\frac{dt}{t}< \infty.  \]
We can now define $H$ using a Rubio de Francia iteration algorithm:
\begin{equation} \label{eqn:Hdefn}
 H = \Rf h = \sum_{k=0}^\infty \frac{M_B^k h}{2^k
  \|M_B\|_{L^{(\frac{p}{p_0})'}}^k}. 
\end{equation}
Then, arguing as in the proof of Theorem~\ref{RHOrlicz} we have that
$h \leq H$ and $\|H\|_{L^{(\frac{p}{p_0})'}}\leq 2 \|h\|_{L^{(\frac{p}{p_0})'}}$.
This proves properties $(a)$ and $(b)$ above.
Moreover, since $B$ is a Young function, again by the above
argument we have that
\[ M(\Rf h) \leq M_B(\Rf h) \lesssim \Rf h; \]
Thus $H \in A_1 \cap RH_B$.    By the definition of $B$ and $C$ we
have that
\[  C^{-1}(t) B^{-1}(t) = \Psi^{-1}(t)^{\frac{p_0}{p}} \Psi^{-1}(t)^{\frac{1}{s}}
 = \Psi^{-1}(t)^{\frac{1}{r}} = \Psi_0^{-1}(t). \]
Therefore, by the generalized H\"older's inequality and the definition
of $A_1$,
\[ \|Hw^{\frac{p_0}{p}}\|_{\Psi_0,Q} 
\lesssim \|H\|_{B,Q}\|w^{\frac{p_0}{p}}\|_{C,Q}
\lesssim \|H\|_{1,Q}\|w^{\frac{p_0}{p}}\|_{1,Q}
\lesssim \|Hw^{\frac{p_0}{p}}\|_{1,Q}, \]
which proves property $(c)$.  This completes our proof when
$p_0<p<q_0$.

\bigskip

The proof when $p=q_0$ is nearly the same as the previous case; here
we describe the changes.  Fix $w\in RH_\infty$, and let $\Psi_0$ be
any Young function in $B_{(q_0/p_0)'}$.   Let $B=\Psi_0$ and define
$H$ by \eqref{eqn:Hdefn}.  Then $H\in A_1 \cap RH_B$ and satisfies properties $(a)$ and $(b)$ as
before.    To prove $(c)$ note first that by
Lemma~\ref{lemma:Ap-wts}(d), $w^{\frac{p_0}{p}}\in RH_\infty$.  By
this, and then using that $H\in RH_B$ and then that $H\in A_1$,
\[ \|Hw^{\frac{p_0}{p}}\|_{\Psi_0,Q} 
\lesssim \|H\|_{B,Q}\|w^{\frac{p_0}{p}}\|_{1,Q}
\lesssim \|H\|_{1,Q}\|w^{\frac{p_0}{p}}\|_{1,Q}
\lesssim \|Hw^{\frac{p_0}{p}}\|_{1,Q}. \]
Given this function $H$, the remainder of the proof goes through
without change. 
This completes the proof.
\end{proof}

\section{Applications}
\label{section:bilinear}

In this section we give several applications of reverse H\"older
extrapolation to prove weighted norm inequalities.   In spirit, though
not in detail, these applications are similar to those proved via
$A_\infty$ extrapolation in~\cite{cruz-uribe-martell-perez04}.  

\subsection*{Calder\'on-Zygmund operators}
A Calder\'on-Zygmund kernel is a function $K(x,y)$ defined away from the
diagonal $\{(x,y):x=y\}$ that satisfies
$$|K(x,y)|\lesssim |x-y|^{-n} $$
and
\begin{multline} \label{eqn:continuity}
 |K(x,y)-K(x,y+h)| + |K(x,y)-K(x+h,y)|  \\
\leq C\frac{|h|^\epsilon}{|x-y|^{n+\epsilon}}, \quad |x-y|>2|h|. 
\end{multline}
A Calder\'on-Zygmund operator (CZO) is an $L^2$ bounded linear
operator associated to a Calder\'on-Zygmund kernel $K$ such that the
representation
$$Tf(x)=\int_{\R^n}K(x,y)f(y)\,dy$$
holds for all $f\in L^\infty_c$ and $x\not\in \supp(f)$.  

To prove norm inequalities for CZOs we will use the theory of sparse
operators over dyadic grids.  The following is based on the seminal
work of Lerner~\cite{Lern}; the pointwise estimates are due to Conde-Alonso and
Rey~\cite{CondeAlonso:2014vs} and Lacey~\cite{Lacey:2015wf}.  (See
also the recent monograph by Lerner and Nazarov~\cite{LernNaz}, which
uses a slightly different definition of a dyadic grid.)  

By a dyadic grid $\D$ we mean a collection of cubes $\D=\bigcup_k
\D_k$ in $\R^n$ that have the following properties:
\begin{enumerate}  
\item for each $k$,  if $Q\in \D_k$, then $|Q|=2^{-kn}$;

\item  the cubes in $\D_k$ form a partition of $\R^n$;

\item  if $P,\,Q\in \D$, then $P\cap Q = \varnothing$, $P\subset Q$ or
  $Q\subset P$. 
\end{enumerate}
Given a dyadic grid $\D$ we say a subfamily $\Sp\subset \D$ is {\it
  sparse} if for each $Q\in \Sp$ 
\[ \bigg| \bigcup_{\substack{Q'\subset \Sp \\ Q'\subsetneq Q}} Q'
\bigg| \leq \frac{1}{2}|Q|. \]
As a consequence, there exists $E_Q\subset Q$ such that
the family $\{E_Q\}_{Q\in \Sp}$ is pairwise disjoint and there exists
a uniform constant such that $|Q|\leq c|E_Q|.$

  Given a dyadic grid $\D$ and a sparse family $\Sp \subset \D$,
  define a sparse operator by
$$T^\Sp f(x)=\sum_{Q\in \Sp} \left(\,\dashint_Q
  f\,dy\,\right)\chi_Q(x).$$
Sparse operators are positive, linear operators.  Their importance is
that CZOs can be dominated by them pointwise.  

\begin{theorem} \label{thm:dominate}
Given a CZO operator $T$ and a function $f$, there exist $3^n$ dyadic
grids $\{\D^k\}_{k=1}^{3^n}$ and sparse families $\Sp^k\subset \D^k$ such that 
$$|Tf(x)|\lesssim \sum_{k=1}^{3^n} T^{\Sp^k}(|f|)(x)$$
almost everywhere.  The implicit constant depends on the dimension and
the kernel $K$ associated to $T$. 
\end{theorem}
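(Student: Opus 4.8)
The plan is to establish a local, dyadic form of the estimate and then assemble the global inequality by the standard three-grid device. First I would recall the elementary fact that there exist $3^n$ dyadic grids $\D^1,\dots,\D^{3^n}$ with the property that for every cube $Q\subset\R^n$ there are an index $k$ and a cube $R\in\D^k$ with $3Q\subset R$ and $\ell(R)\le 9\,\ell(Q)$; this is what will eventually let us pass from averages over the dilate $3Q$ to averages over an honest dyadic cube. It also suffices to prove the bound for, say, bounded $f$ with compact support, the general case following by a routine limiting argument.

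The heart of the matter is a local claim: fix a cube $Q_0$ and write $\mathcal{D}(Q_0)$ for its dyadic subcubes; then there is a sparse family $\Sp\subset\mathcal{D}(Q_0)$, with $Q_0\in\Sp$, such that for almost every $x\in Q_0$,
\[ |T(f\chi_{3Q_0})(x)|\ \lesssim\ \sum_{Q\in\Sp}\Big(\dashint_{3Q}|f|\,dy\Big)\chi_Q(x). \]
To prove this I would introduce the grand maximal truncation operator
\[ \mathcal{M}_T g(x)=\sup_{Q\ni x}\ \esssup_{\xi\in Q}\ \big|T(g\chi_{\R^n\setminus 3Q})(\xi)\big|, \]
and show, using the size bound $|K(x,y)|\lesssim|x-y|^{-n}$ together with the smoothness hypothesis~\eqref{eqn:continuity}, that $\mathcal{M}_T$ maps $L^1$ into $L^{1,\infty}$. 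Since $T$, being a CZO, is itself of weak type $(1,1)$ by the classical theory, I can then run a Calder\'on--Zygmund stopping-time argument inside $Q_0$: select the maximal cubes $Q_j\in\mathcal{D}(Q_0)$ on which
\[ \dashint_{Q_j}|f|>\Lambda\dashint_{3Q_0}|f|\quad\text{or}\quad\dashint_{Q_j}\mathcal{M}_T(f\chi_{3Q_0})>\Lambda\dashint_{3Q_0}|f|, \]
for a large dimensional constant $\Lambda$; the two weak-type bounds give $\sum_j|Q_j|\le\tfrac12|Q_0|$. On $Q_0\setminus\bigcup_jQ_j$ one has $|T(f\chi_{3Q_0})(x)|\lesssim\dashint_{3Q_0}|f|$, while on each $Q_j$ the off-diagonal piece $|T(f\chi_{3Q_0\setminus 3Q_j})(x)|$ is pointwise $\lesssim\dashint_{3Q_0}|f|$ by the kernel estimates, so the problem localizes to $Q_j$ with $f\chi_{3Q_j}$ in place of $f\chi_{3Q_0}$. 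Iterating over all generations and collecting every selected cube yields the sparse family $\Sp$.

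To globalize, I would exhaust $\R^n$ by an increasing sequence of dyadic cubes $Q_0$ from a fixed grid, apply the local claim on each and pass to the limit, obtaining a single sparse family $\Sp_0$ with $|Tf(x)|\lesssim\sum_{Q\in\Sp_0}\big(\dashint_{3Q}|f|\big)\chi_Q(x)$. Finally I would use the three-grid fact to pick, for each $Q\in\Sp_0$, a cube $R=R(Q)$ in one of the grids $\D^k$ with $3Q\subset R$ and $\ell(R)\le 9\ell(Q)$, so that $\dashint_{3Q}|f|\lesssim\dashint_{R}|f|$ and $\chi_Q\le\chi_R$; sorting the cubes $R(Q)$ by the grid they lie in and extracting a sparse subfamily from each of the $3^n$ collections --- which is possible because $Q\mapsto R(Q)$ is boundedly-to-one with $|Q|\approx|R(Q)|$, the loss being absorbed into the implicit constant --- produces the families $\Sp^k\subset\D^k$ and the asserted bound $|Tf|\lesssim\sum_{k=1}^{3^n}T^{\Sp^k}(|f|)$. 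The main obstacle is the first technical input: the weak $(1,1)$ estimate for $\mathcal{M}_T$, together with the pointwise control of the off-diagonal terms $T(f\chi_{3Q_0\setminus 3Q_j})$ on $Q_j$. This is exactly where the Calder\'on--Zygmund size and smoothness conditions are consumed; once it is in hand, the stopping-time recursion and the three-grid bookkeeping are routine.
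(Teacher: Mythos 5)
The paper does not prove Theorem~\ref{thm:dominate}: it is quoted as known, with the pointwise sparse domination attributed to Conde-Alonso--Rey and Lacey, and to the Lerner--Nazarov monograph. Your sketch follows exactly the now-standard argument from those references --- the local claim driven by the grand maximal truncation operator $\mathcal{M}_T$, its weak $(1,1)$ bound (which in turn rests on the weak $(1,1)$ of $T$ together with a Cotlar-type oscillation estimate for the truncations), the Calder\'on--Zygmund stopping-time iteration producing a sparse family on a fixed cube, and finally the three-grid device to replace averages over $3Q$ by genuine dyadic averages in one of $3^n$ grids --- so there is nothing in the paper to compare against beyond the citation, and your outline is consistent with it.

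Two small points you may want to tighten if you expand the sketch. First, the phrase that the off-diagonal piece $T(f\chi_{3Q_0\setminus 3Q_j})$ is bounded on $Q_j$ ``by the kernel estimates'' undersells what is actually used: for $x\in Q_j$ one writes this as a difference of truncations controlled by $\mathcal{M}_T(f\chi_{3Q_0})$ evaluated at a point of the dyadic parent $\widehat{Q_j}$, and then invokes the stopping rule (which you did build into the selection, so the ingredient is there, but the logical chain should go through $\mathcal{M}_T$ and the parent rather than the kernel alone). Second, in the globalization step an increasing sequence of cubes from a single dyadic grid does not exhaust $\R^n$; either work with $f$ of compact support and one sufficiently large cube containing $\supp f$ (handling the tail of $Tf$ outside it directly), or run the local argument separately over the $2^n$ ``quadrant'' exhaustions before merging. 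Neither point is a gap in the method --- only in the phrasing --- and the final bookkeeping into $3^n$ sparse families is as you describe.
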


Using sparse operators and reverse H\"older extrapolation, we can
prove our generalization of Theorem~\ref{thm:double-bump}.   

\begin{theorem} \label{thm:CZO}
Let $T$ be a CZO, and fix $ 1<p<\infty$.  Suppose $(u,v)$ is a pair of weights that satisfies
$$\sup_Q \|u\|_{\Phi,Q}\|v^{-1}\|_{\Psi,Q}<\infty,$$
where $\bar{\Phi}\in B_{p'}$ and $\Psi$ is any Young function.  
Then
\begin{equation} \label{eqn:CZO1}
\|(Tf)u\|_{L^p}\lesssim \|M_{\bar{\Psi}}(fv)\|_{L^p}.
\end{equation}
In particular, if $\bar{\Psi}\in B_p$, then 
\begin{equation} \label{eqn:lerner}
\|(Tf)u\|_{L^p}\lesssim \|fv\|_{L^p}.
\end{equation}
\end{theorem}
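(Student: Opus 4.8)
The plan is to reduce the two-weight inequality for the CZO $T$ to a one-weight inequality via Theorem~\ref{thm:wtdOrlicz-special} (or more precisely the unweighted extrapolation Theorem~\ref{RHOrlicz}, depending on how the reduction is set up), applied to the sparse operators that dominate $T$. By Theorem~\ref{thm:dominate} it suffices to prove the bound \eqref{eqn:CZO1} with $Tf$ replaced by a single sparse operator $T^{\Sp} (|f|)$, since there are only $3^n$ of them and the implicit constants are uniform. So from now on I would fix a dyadic grid $\D$ and a sparse family $\Sp \subset \D$ and work with $T^{\Sp}$.

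The first main step is to set up the extrapolation. Writing $F = M_{\bar\Psi}(fv)$ for the right-hand side, I want to apply reverse H\"older extrapolation to the family of pairs $\F = \{((T^{\Sp}f)u, \, M_{\bar\Psi}(fv))\}$, with exponent $p$ playing the role of the target exponent. Concretely I would pick the starting exponent $p_0$ and the Young function $\Psi_0$ so that the hypothesis of Theorem~\ref{RHOrlicz} (or Theorem~\ref{thm:wtdOrlicz}) is an estimate of the form $\|(T^{\Sp}f)u\|_{L^{p_0}(w)} \lesssim \|M_{\bar\Psi}(fv)\|_{L^{p_0}(w)}$ for all $w$ in the appropriate $RH_{\Psi_0}$ class, and so that the conclusion at $p$ is the unweighted bound \eqref{eqn:CZO1}. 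The growth condition $\bar\Phi \in B_{p'}$ is exactly what should pin down the admissible $\Psi_0$ (equivalently the value of $(q_0/p_0)'$) through the duality between $\Phi$ and $\bar\Phi$; here is where the hypothesis on $\Phi$ enters.

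The second — and I expect the hardest — step is to verify the weighted starting estimate for the sparse operator. This is a testing/duality computation: for $w \in RH_{\Psi_0}$ one dualizes $\|(T^{\Sp}f)u\|_{L^{p_0}(w)}$, expands $T^{\Sp}$ as a sum over $Q \in \Sp$, and on each cube $Q$ applies the generalized H\"older inequality in Orlicz norms to split $\dashint_Q |f|$ against the test function: one factor $\|u\cdots\|$ gets controlled by the testing/bump hypothesis $\sup_Q \|u\|_{\Phi,Q}\|v^{-1}\|_{\Psi,Q} < \infty$, the factor involving $v^{-1}$ pairs with $fv$ to reproduce $\|v^{-1}\|_{\Psi,Q}\,\dashint_Q |fv| \lesssim M_{\bar\Psi}(fv)$ — this is precisely the point where $M_{\bar\Psi}$ appears on the right-hand side rather than $fv$ — and the remaining factor is estimated using the sparseness of $\Sp$ (passing from $|Q|$ to $|E_Q|$) together with the reverse H\"older property of $w$ and the boundedness of a suitable Orlicz maximal operator coming from $\Psi_0 \in B_\bullet$. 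Assembling these pointwise bounds over the pairwise-disjoint sets $E_Q$ gives the $L^{p_0}(w)$ estimate. The delicate part is choosing the three Young functions in the generalized H\"older inequality so that their inverses multiply correctly (à la the $C,B,\Psi_0$ bookkeeping in the proof of Theorem~\ref{thm:wtdOrlicz}) and so that $\Psi_0$ genuinely lies in the required $B$-class; once the exponents are matched this is bookkeeping, but getting them to match is the crux.

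Finally, feeding this starting estimate into Theorem~\ref{RHOrlicz} (taking $w \equiv 1$ in its conclusion) yields \eqref{eqn:CZO1}, and summing over the $3^n$ grids gives the result for $T$. The last assertion \eqref{eqn:lerner} is then immediate: if in addition $\bar\Psi \in B_p$, then by Theorem~\ref{thm:orlicz-max} the operator $M_{\bar\Psi}$ is bounded on $L^p$, so $\|M_{\bar\Psi}(fv)\|_{L^p} \lesssim \|fv\|_{L^p}$, and \eqref{eqn:lerner} follows from \eqref{eqn:CZO1}.
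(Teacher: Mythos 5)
Your plan coincides with the paper's proof in structure: dominate $T$ by sparse operators via Theorem~\ref{thm:dominate}, invoke reverse H\"older extrapolation to reduce to a weighted starting estimate for a sparse operator, and verify that estimate with a sparse-family computation. The ``crux'' you flag but leave open has a simple resolution: take $p_0 = 1$, $q_0 = p$, and $\Psi_0 = \bar\Phi$, so that $(q_0/p_0)' = p'$ and the hypothesis $\bar\Phi\in B_{p'}$ is exactly $\Psi_0\in B_{(q_0/p_0)'}$; Theorem~\ref{RHOrlicz} with $w\equiv 1$ in its conclusion then yields \eqref{eqn:CZO1}. With $p_0=1$ the verification for $w\in RH_{\bar\Phi}$ is also simpler than you anticipate: there is no duality to perform, and the generalized H\"older inequality enters only through the two ordinary pairs $(\Psi,\bar\Psi)$ applied to $\dashint_Q f = \dashint_Q (fv)\,v^{-1}\,dx$ and $(\Phi,\bar\Phi)$ applied to $\dashint_Q uw\,dx$; then one uses the bump hypothesis, the bound $\|w\|_{\bar\Phi,Q}|Q|\lesssim w(Q)\lesssim w(E_Q)$ coming from $w\in RH_{\bar\Phi}$ together with sparseness, and the disjointness of the sets $E_Q$. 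In particular, no boundedness of an Orlicz maximal operator is used in that step, and there is no three-Young-function ($C,B,\Psi_0$-style) bookkeeping as in the proof of Theorem~\ref{thm:wtdOrlicz}.
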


\begin{remark}
By using results from~\cite{Hytonen:2015gp,Lacey:2015wf},
Theorem~\ref{thm:CZO} can be extended to singular integral operators
that replace~\eqref{eqn:continuity} with a weaker Dini continuity
condition.  Details are left to the interested reader.
\end{remark}

\begin{proof}
When $\bar{\Psi}\in B_p$, \eqref{eqn:lerner} follows immediately from
\eqref{eqn:CZO1}.  To prove this inequality, by
Theorem~\ref{thm:dominate} it will suffice to prove it with $T$
replaced by a sparse operator $T^\Sp$ and with $f$ non-negative.  By
Theorem \ref{RHOrlicz} with $q_0=p$ and $p_0=1$, it will suffice to
show that if  $w\in RH_{\bar{\Phi}}$, then 
$$\|(T^\Sp f) u\|_{L^1(w)}\lesssim \|M_{\bar{\Psi}}(fv)\|_{L^1(w)}.$$

This inequality follows by a straightforward computation using the
properties of a sparse family.  We have that
$$\int_{\R^n}(T^\Sp f )uw\,dx=\sum_{Q\in \Sp}\Big(\,\dashint_Q
f\,dx\Big)\Big(\, \dashint_Q uw\,dx\Big) |Q|.$$ 
Further, since by Lemma~\ref{lemma:Ainfty}, $w\in
RH_{\bar{\Phi}}\subset A_\infty$, 
\[ \|w\|_{\bar{\Phi},Q} |Q| \leq [w]_{RH_{\bar{\Phi}}}w(Q) \leq
Cw(E_Q). \]
Therefore, by the generalized H\"older's inequality, 
\begin{align*}
\sum_{Q\in \Sp}\Big(\,\dashint_Q f\,dx\Big)\Big(\, \dashint_Q uw\,dx\Big) |Q|&\leq \sum_{Q\in \Sp}\|fv\|_{\bar{\Psi},Q} \|v^{-1}\|_{\Psi,Q} \|u\|_{\Phi,Q} \|w\|_{\bar{\Phi},Q} |Q|\\
&\lesssim \sum_{Q\in \Sp}\|fv\|_{\bar{\Psi},Q}  \|w\|_{\bar{\Phi},Q} |Q| \\
&\lesssim \sum_{Q\in \Sp} \|fv\|_{\bar{\Psi},Q} w(E_Q)\\
&\lesssim \int_{\R^n} M_{\bar{\Psi}}(fv) w\,dx.\\
\end{align*}
\end{proof}

\subsection*{Bilinear Calder\'on-Zygmund operators}
The results of the previous section extend naturally to the
multilinear setting. 
A bilinear CZO is defined by the integral formula
$$T(f,g)(x)=\int_{\R^n}K(x,y,z)f(y)g(z)\,dydz \qquad x\notin (\text{supp}\, f)\cap (\text{supp}\, g).$$
for $f,g\in L^\infty_c(\R^n)$ where $K$ is a bilinear Calder\'on-Zygmund kernel:
$$|K(x,y,z)|\lesssim (|x-y|+|x-z|)^{-2n}, \qquad |\nabla K(x,y,z)|\lesssim (|x-y|+|x-z|)^{-2n-1}.$$
Bilinear CZOs can also be dominated pointwise by bilinear sparse
operators.  Again, given a dyadic grid $\D$ and a sparse family
$\Sp\subset \D$, we define
$$T^{\Sp}(f,g)(x)=\sum_{Q\in \Sp} \left(\dashint_Q
  f\,dy\right)\left(\dashint_Q g\,dy\right)\chi_Q(x).$$

The following estimate was proved
in~\cite{CondeAlonso:2014vs,LernNaz}.  

\begin{theorem} \label{thm:bilinear-sparse}
Given a bilinear CZO, $T$ and functions $f$, $g$, there exist $3^n$
dyadic grids $\D_k$ and sparse families $\Sp_k\subset \D_k$ such that 
\[ |T(f,g)(x)| \lesssim \sum_{k=1}^{3^n} T^{\Sp_k}(|f|,|g|)(x). \]
\end{theorem}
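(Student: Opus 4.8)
The plan is to mimic the proof of Theorem~\ref{thm:dominate} for the linear case: reduce the problem to a pointwise domination of $|T(f,g)(x)|$ by a sum of bilinear sparse operators, where the sparse families depend on $f$, $g$, and a shifted dyadic grid. The starting point is the observation, going back to Lerner and refined in~\cite{CondeAlonso:2014vs,LernNaz}, that any cube in $\R^n$ is contained in a cube of comparable size belonging to one of $3^n$ fixed ``shifted'' dyadic grids $\D^{(k)}$; this is what produces the factor $3^n$ in the statement. So it suffices to fix one dyadic grid $\D$ and show that, after restricting attention to cubes whose ``parent'' in the decomposition lies in $\D$, one can extract a sparse subfamily $\Sp\subset\D$ with the desired pointwise bound. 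Thus the real content is a single-grid statement of the form $|T(f,g)(x)|\lesssim \sum_{Q\in\Sp}\langle|f|\rangle_Q\langle|g|\rangle_Q\chi_Q(x)$ for an appropriate sparse $\Sp$.

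The key steps, in order, are as follows. First, one introduces the bilinear grand maximal truncation operator adapted to $T$,
\[
\M_T(f,g)(x) = \sup_{Q\ni x}\ \esssup_{\xi\in Q}\ |T(f\chi_{\R^n\setminus 3Q},g\chi_{\R^n\setminus 3Q})(\xi)|,
\]
and shows, using the size and smoothness bounds on $K$, that $\M_T$ maps $L^1\times L^1\to L^{1/2,\infty}$ (equivalently, that $\M_T(f,g)(x)\lesssim \mathcal M_{\al=0}(f,g)(x)$ plus a weak-type contribution), together with the weak $(1,1)\times(1,1)\to (1/2,\infty)$ bound for $T$ itself which follows from the Calder\'on--Zygmund theory for bilinear operators. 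Second, one runs a Calder\'on--Zygmund-type stopping-time argument: starting from a fixed cube $Q_0$, one selects the maximal dyadic subcubes $Q_j\subset Q_0$ on which either $\langle|f|\rangle_{Q_j}\langle|g|\rangle_{Q_j} > C\langle|f|\rangle_{Q_0}\langle|g|\rangle_{Q_0}$ or the local version of $\M_T$ exceeds $C$ times its average over $Q_0$, and shows (via the weak-type bounds just established) that $\sum_j |Q_j|\le \tfrac12|Q_0|$. On the complement one has the pointwise control $|T(f\chi_{3Q_0},g\chi_{3Q_0})(x)|\lesssim \langle|f|\rangle_{3Q_0}\langle|g|\rangle_{3Q_0}$, and then one iterates downward into each $Q_j$, collecting all the selected cubes (dilated by $3$) into the sparse family. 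Third, one absorbs the dilation factor $3$ by passing to the $3^n$ shifted grids, which is exactly where the $3^n$ in the conclusion comes from, and sums.

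The main obstacle is the weak-type endpoint input for the bilinear operator: unlike the linear case, the natural target space is $L^{1/2,\infty}$ rather than $L^{1,\infty}$, so the stopping-time measure estimate $\sum_j|Q_j|\le\tfrac12|Q_0|$ requires a little care — one must split the selection into the ``maximal function'' part (handled by the ordinary weak $(1,1)$ bound for $\mathcal M_0(f,g)$, which is itself controlled by $Mf\cdot Mg$ hence by $L^{1/2,\infty}$) and the ``$\M_T$'' part, and choose the constant $C$ large enough that each contributes at most $\tfrac14|Q_0|$. Since this theorem is quoted from~\cite{CondeAlonso:2014vs,LernNaz} and is used only as a black box in the applications that follow, I would in fact simply cite those references for the full argument; but the sketch above is the route one would take to prove it from scratch.
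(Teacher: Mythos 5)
The paper does not prove Theorem~\ref{thm:bilinear-sparse}; it simply cites \cite{CondeAlonso:2014vs,LernNaz} and uses the result as a black box, which you correctly note at the end. Your sketch of the Lacey-style argument (bilinear grand maximal truncation operator, $L^1\times L^1\to L^{1/2,\infty}$ endpoint, stopping-time selection with the constant chosen large enough that each of the two selection conditions contributes at most $\tfrac14|Q_0|$, and $3^n$ shifted grids to absorb the dilation) is a correct outline of the modern proof; one minor clarification is that the sparse cubes themselves are the selected dyadic $Q_j$, not their triples --- the passage to the $3^n$ grids is only needed to convert each $3Q_j$ into a genuine dyadic cube in some shifted grid. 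It is also worth noting that the original Conde-Alonso--Rey proof proceeds via local mean oscillation rather than the grand maximal truncation, though both routes yield the stated pointwise bound.
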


Given two Young functions $\Psi_1$ and $\Psi_2$, we define the bisublinear maximal function
$$\M_{\Psi_1,\Psi_2}(f,g)=\sup_{Q\ni x}
\|f\|_{\Psi_1,Q}\|g\|_{\Psi_2,Q}.$$
Clearly we have that $\M_{\Psi_1,\Psi_2}(f,g)(x)\leq M_{\Psi_1}f(x)
M_{\Psi_2}g(x)$, so by H\"older's inequality, if
$\Psi_1\in B_{p_1}$ and $\Psi_2\in B_{p_2}$, then $\M_{\Psi_1,\Psi_2} : L^{p_1}\times
L^{p_2} \rightarrow L^p$.  
We can now state and prove the analog of  Theorem~\ref{thm:CZO} for
bilinear CZOs.  We get two results; in the first we assume $p>1$.  

\begin{theorem} \label{bilinearp>1} 
Let $T$ be a
  bilinear CZO,  fix $1<p_1,p_2<\infty$, and define
  $p=\frac{p_1p_2}{p_1+p_2}$.  Suppose $p>1$ and ,$(u,v_1,v_2)$
  are weights that satisfy
$$\sup_Q
\|u\|_{\Phi,Q}\|v_1^{-1}\|_{\Psi_1,Q}\|v_2^{-1}\|_{\Psi_2,Q}<\infty, $$
where  $\Phi$ is a Young function with $\bar{\Phi}\in B_{p'}$
  and $\Psi_1$, $\Psi_2$ are Young functions.  
Then
$$\|T(f,g)u\|_{L^p}\lesssim \|M_{\bar{\Psi}_1,\bar{\Psi}_2}(fv_1,gv_2)\|_{L^p}.$$
In particular, if $\bar{\Psi}_1\in B_{p_1}$ and $\bar{\Psi}_2\in B_{p_2}$, then 
$$\|T(f,g)u\|_{L^p}\lesssim \|fv_1\|_{L^{p_1}}\|gv_2\|_{L^{p_2}}.$$
\end{theorem}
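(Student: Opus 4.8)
The plan is to follow the same strategy used for the linear case in Theorem~\ref{thm:CZO}, namely reduce to a weighted $L^1$ estimate for the bilinear sparse operator and then prove that estimate by a direct computation. By Theorem~\ref{thm:bilinear-sparse} it suffices to prove the inequality with $T$ replaced by a bilinear sparse operator $T^\Sp$ and with $f,g$ non-negative. The exponent bookkeeping is the new wrinkle: since $p=\frac{p_1p_2}{p_1+p_2}$ and $p_1,p_2>1$ with $p>1$, the conjugate exponent $p'$ is finite and $\bar\Phi\in B_{p'}$ makes sense. I would apply Theorem~\ref{RHOrlicz} (unweighted Orlicz reverse H\"older extrapolation) with $q_0=p$ and $p_0=1$ to the family $\F$ consisting of pairs $(T^\Sp(f,g),M_{\bar\Psi_1,\bar\Psi_2}(fv_1,gv_2))$; this reduces the claim to showing that for every $w\in RH_{\bar\Phi}$,
\[
\|T^\Sp(f,g)\,u\|_{L^1(w)}\lesssim \|M_{\bar\Psi_1,\bar\Psi_2}(fv_1,gv_2)\|_{L^1(w)}.
\]

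Next I would expand the left side using the definition of the bilinear sparse operator and Fubini:
\[
\int_{\R^n}T^\Sp(f,g)\,uw\,dx=\sum_{Q\in\Sp}\Big(\dashint_Q f\,dy\Big)\Big(\dashint_Q g\,dy\Big)\Big(\dashint_Q uw\,dx\Big)|Q|.
\]
On each cube I would estimate $\dashint_Q f\,dy=\dashint_Q fv_1\cdot v_1^{-1}\,dy\le 2\|fv_1\|_{\bar\Psi_1,Q}\|v_1^{-1}\|_{\Psi_1,Q}$ by the generalized H\"older inequality, and similarly for $g$, while for the third average I would write $\dashint_Q uw\,dx\le 2\|u\|_{\Phi,Q}\|w\|_{\bar\Phi,Q}$. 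Collecting the three weight bump factors $\|u\|_{\Phi,Q}\|v_1^{-1}\|_{\Psi_1,Q}\|v_2^{-1}\|_{\Psi_2,Q}$, these are uniformly bounded by the hypothesis, so that after absorbing constants,
\[
\sum_{Q\in\Sp}\Big(\dashint_Q f\Big)\Big(\dashint_Q g\Big)\Big(\dashint_Q uw\Big)|Q|\lesssim \sum_{Q\in\Sp}\|fv_1\|_{\bar\Psi_1,Q}\|gv_2\|_{\bar\Psi_2,Q}\|w\|_{\bar\Phi,Q}|Q|.
\]

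Then, exactly as in the linear proof, I would use that $w\in RH_{\bar\Phi}\subset A_\infty$ (Lemma~\ref{lemma:Ainfty}) together with the sparseness sets $E_Q$: the reverse H\"older condition gives $\|w\|_{\bar\Phi,Q}|Q|\le [w]_{RH_{\bar\Phi}}w(Q)\lesssim w(E_Q)$. Since $\|fv_1\|_{\bar\Psi_1,Q}\|gv_2\|_{\bar\Psi_2,Q}\le \inf_{x\in Q} M_{\bar\Psi_1,\bar\Psi_2}(fv_1,gv_2)(x)$ and the $E_Q$ are pairwise disjoint, summing over $Q\in\Sp$ yields
\[
\sum_{Q\in\Sp}\|fv_1\|_{\bar\Psi_1,Q}\|gv_2\|_{\bar\Psi_2,Q}w(E_Q)\lesssim\int_{\R^n}M_{\bar\Psi_1,\bar\Psi_2}(fv_1,gv_2)\,w\,dx,
\]
which is the desired $L^1(w)$ estimate. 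Finally, the ``in particular'' statement follows because $\bar\Psi_1\in B_{p_1}$ and $\bar\Psi_2\in B_{p_2}$ imply $M_{\bar\Psi_1,\bar\Psi_2}:L^{p_1}\times L^{p_2}\to L^p$ as observed right before the statement, so $\|M_{\bar\Psi_1,\bar\Psi_2}(fv_1,gv_2)\|_{L^p}\lesssim\|fv_1\|_{L^{p_1}}\|gv_2\|_{L^{p_2}}$.

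The only real obstacle I anticipate is checking that the hypotheses of Theorem~\ref{RHOrlicz} genuinely apply with $q_0=p$: one needs $p_0=1<q_0=p$, which is exactly where the assumption $p>1$ is used, and one needs $\bar\Phi\in B_{(q_0/p_0)'}=B_{p'}$, which is precisely the stated bump condition on $\Phi$. Everything else is the routine three-fold H\"older argument plus the sparse-to-maximal passage; the bilinear structure only affects the number of H\"older factors, not the logic. I would also remark that since Theorem~\ref{RHOrlicz} is applied to a fixed pair family, the usual truncation device (replacing $T^\Sp(f,g)$ by $\min(T^\Sp(f,g),N)\chi_{B(0,N)}$ and letting $N\to\infty$) secures the finiteness needed to invoke extrapolation.
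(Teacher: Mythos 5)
Your proof is correct and follows essentially the same route as the paper: reduce to the bilinear sparse operator via Theorem~\ref{thm:bilinear-sparse}, invoke Theorem~\ref{RHOrlicz} with $q_0=p$ and $p_0=1$ to reduce to a weighted $L^1(w)$ estimate for $w\in RH_{\bar\Phi}$, and then verify that estimate by the three-fold generalized H\"older inequality on each sparse cube followed by the $\|w\|_{\bar\Phi,Q}|Q|\lesssim w(E_Q)$ passage and disjointness of the $E_Q$. The auxiliary observations you include (why $p>1$ and $\bar\Phi\in B_{p'}$ match the hypotheses of Theorem~\ref{RHOrlicz}, the truncation device for finiteness, and the deduction of the ``in particular'' statement) are all consistent with the paper's argument.
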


\begin{proof} 
  As in the proof of Theorem~\ref{thm:CZO}, it will suffice to prove
  the first inequality; the second is an immediate corollary.  And
  again, it will suffice to prove this for a bilinear sparse operator
  $T^\Sp$ and non-negative $f,\,g$.  
By  Theorem \ref{RHOrlicz} with $q_0=p$ and $p_0=1$ we only need to
prove a weighted $L^1$ inequality.

Fix
$w\in RH_{\bar{\Phi}}$;  then we can essentially repeat the previous argument:
\begin{align*}
\int_{\R^n}T^\Sp(f,g)wu\,dx&=\sum_{Q\in \Sp}\Big(\,\dashint_Q f\,dx\Big)\left(\dashint_Q g\,dx\right)\Big(\, \dashint_Q uw\,dx\Big) |Q|\\
&\leq \sum_{Q\in \Sp}\|fv_1\|_{\bar{\Psi}_1,Q} \|gv_2\|_{\bar{\Psi}_2,Q}\|v_1^{-1}\|_{\Psi_1,Q}\|v_2^{-1}\|_{\Psi_2,Q}  \|u\|_{\Phi,Q} \|w\|_{\bar{\Phi},Q} |Q|\\
&\lesssim \sum_{Q\in \Sp}\|fv_1\|_{\bar{\Psi}_1,Q} \|gv_2\|_{\bar{\Psi}_2,Q}  \|w\|_{\bar{\Phi},Q} |Q| \\
&\lesssim\sum_{Q\in \Sp} \|fv\|_{\bar{\Psi}_1,Q}  \|gv_2\|_{\bar{\Psi}_2,Q}w(E_Q)\\
&\lesssim\int_{\R^n} M_{\bar{\Psi}_1,\bar{\Psi}_2}(fv_1,gv_2) w\,dx.\\
\end{align*}
\end{proof}

Surprisingly, when $p\leq 1$ we do not need an Orlicz bump on the
weight $u$:  it suffices to take the localized $L^p$ norm.  

\begin{theorem} \label{thm:bilinear-p<1}
Let $T$ be a
  bilinear CZO,  fix $1<p_1,p_2<\infty$, and define
  $p=\frac{p_1p_2}{p_1+p_2}$.  Suppose $p\leq 1$ and $(u,v_1,v_2)$ are weights that satisfy
$$\sup_Q \left(\dashint_Q
  u^p\,dx\right)^{\frac{1}{p}}\|v_1^{-1}\|_{\Psi_1,Q}\|v_2^{-1}\|_{\Psi_1,Q}<\infty, $$
where $\Psi_1$, $\Psi_2$  are Young functions.  
Then
$$\|T(f,g)u\|_{L^p}\lesssim \|M_{\bar{\Psi}_1,\bar{\Psi}_2}(fv_1,gv_2)\|_{L^p}.$$
In particular, if $\bar{\Psi}_1\in B_{p_1}$ and $\bar{\Psi}_2\in B_{p_2}$ then 
$$\|T(f,g)u\|_{L^p}\lesssim \|fv_1\|_{L^{p_1}}\|gv_2\|_{L^{p_2}}.$$
\end{theorem}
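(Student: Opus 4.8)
The plan is to follow the structure of the proof of Theorem~\ref{bilinearp>1}, replacing the Rubio de Francia / extrapolation step by the elementary subadditivity of $t\mapsto t^p$ that is available precisely because $p\le 1$; this is exactly what lets us drop the Orlicz bump on $u$. As in that proof, the second inequality follows immediately from the first, since $M_{\bar{\Psi}_1,\bar{\Psi}_2}(fv_1,gv_2)\le M_{\bar{\Psi}_1}(fv_1)\,M_{\bar{\Psi}_2}(gv_2)$ and, by H\"older's inequality and Theorem~\ref{thm:orlicz-max}, $M_{\bar{\Psi}_i}$ is bounded on $L^{p_i}$ when $\bar{\Psi}_i\in B_{p_i}$ (i.e.\ $M_{\bar{\Psi}_1,\bar{\Psi}_2}:L^{p_1}\times L^{p_2}\to L^p$). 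Likewise, by Theorem~\ref{thm:bilinear-sparse} and the inequality $(\sum_k a_k)^p\le\sum_k a_k^p$ for $a_k\ge0$, $0<p\le1$, it suffices to prove the first inequality with $T$ replaced by a single bilinear sparse operator $T^\Sp=\sum_{Q\in\Sp}\lambda_Q\chi_Q$, where $\lambda_Q=\big(\dashint_Q f\big)\big(\dashint_Q g\big)$, and with $f,g$ non-negative (replacing $f,g$ by $|f|,|g|$ changes neither side).

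First I would apply the same elementary inequality pointwise to $\sum_{Q\ni x}\lambda_Q$, obtaining
\[
\|T^\Sp(f,g)u\|_{L^p}^p\le\int_{\R^n}\Big(\sum_{Q\in\Sp}\lambda_Q\chi_Q(x)\Big)^p u(x)^p\,dx\le\sum_{Q\in\Sp}\lambda_Q^p\int_Q u^p\,dx.
\]
Next, by the generalized H\"older inequality for Orlicz norms,
\[
\lambda_Q\lesssim\|fv_1\|_{\bar{\Psi}_1,Q}\,\|v_1^{-1}\|_{\Psi_1,Q}\,\|gv_2\|_{\bar{\Psi}_2,Q}\,\|v_2^{-1}\|_{\Psi_2,Q},
\]
so multiplying by $\big(\dashint_Q u^p\big)^{1/p}$ and using the hypothesis on $(u,v_1,v_2)$ yields $\lambda_Q\big(\dashint_Q u^p\big)^{1/p}\lesssim\|fv_1\|_{\bar{\Psi}_1,Q}\|gv_2\|_{\bar{\Psi}_2,Q}$. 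Raising this to the $p$-th power and multiplying by $|Q|$ gives
\[
\lambda_Q^p\int_Q u^p\,dx\lesssim\big(\|fv_1\|_{\bar{\Psi}_1,Q}\|gv_2\|_{\bar{\Psi}_2,Q}\big)^p|Q|.
\]

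To finish, I would use $\|fv_1\|_{\bar{\Psi}_1,Q}\|gv_2\|_{\bar{\Psi}_2,Q}\le\inf_{x\in Q}M_{\bar{\Psi}_1,\bar{\Psi}_2}(fv_1,gv_2)(x)$ together with the sparse structure, which furnishes pairwise disjoint sets $E_Q\subset Q$ with $|Q|\le c|E_Q|$. Summing over $Q\in\Sp$ then gives
\[
\sum_{Q\in\Sp}\big(\|fv_1\|_{\bar{\Psi}_1,Q}\|gv_2\|_{\bar{\Psi}_2,Q}\big)^p|Q|\le c\sum_{Q\in\Sp}\int_{E_Q}M_{\bar{\Psi}_1,\bar{\Psi}_2}(fv_1,gv_2)^p\,dx\le c\int_{\R^n}M_{\bar{\Psi}_1,\bar{\Psi}_2}(fv_1,gv_2)^p\,dx,
\]
which is the desired estimate. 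I do not expect a genuine obstacle; the only place the restriction $p\le1$ enters is the first display, and the conceptual point is recognizing that subadditivity of $t\mapsto t^p$ removes the need for the extrapolation argument (and hence for a bump on $u$) used when $p>1$. The only routine cares are that every quantity is nonnegative, so the possibly infinite sums and their interchange with the integral are harmless.
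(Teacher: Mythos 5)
Your proposal is correct and follows essentially the same route as the paper's proof: reduce to a bilinear sparse operator, exploit the $p$-subadditivity $(\sum_k a_k)^p\le\sum_k a_k^p$ to avoid any extrapolation step, apply the generalized H\"older inequality and the hypothesis on $(u,v_1,v_2)$, and then pass from $|Q|$ to $|E_Q|$ to control the sum by the bisublinear maximal function. The only cosmetic difference is that you make the subadditivity reduction from $T$ to a single $T^{\Sp}$ explicit, which the paper leaves implicit.
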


\begin{proof} 
The proof is more straightforward than the proof of Theorem~\ref{bilinearp>1}
since we do not need to use extrapolation.  Again, we will prove it
for a sparse bilinear operator $T^\Sp$ and a pair of non-negative
functions $f,\,g$.  Since $0<p\leq 1$, by convexity we have the
pointwise inequality 
$$T^S(f,g)^p\leq \sum_{Q\in \Sp}\left[\left(\dashint_Q
    f\,dx\right)\left(\dashint_Q g\,dx\right)\right]^p\chi_Q.$$
Therefore, proceeding as we did above,
\begin{align*}\int_{\R^n}(T^\Sp(f,g)u)^p\,dx&\leq\sum_{Q\in \Sp}\left[\left(\dashint_Q
    f\,dx\right)\left(\dashint_Q g\,dx\right)\right]^p\left(\dashint_Q u^p\,dx\right)|Q|\\
   &\leq   \sum_{Q\in \Sp}(\|fv_1\|_{\bar{\Psi}_1,Q} \|gv_2\|_{\bar{\Psi}_2,Q}\|v_1^{-1}\|_{\Psi_1,Q}\|v_2^{-1}\|_{\Psi_2,Q})^p\left(\dashint_Q u^p\,dx\right)|Q| \\
   &\lesssim \sum_{Q\in \Sp}(\|fv_1\|_{\bar{\Psi}_1,Q} \|gv_2\|_{\bar{\Psi}_2,Q})^p|Q| \\
   &\lesssim\sum_{Q\in \Sp}(\|fv_1\|_{\bar{\Psi}_1,Q} \|gv_2\|_{\bar{\Psi}_2,Q})^p|E_Q| \\
   &\lesssim\int_{\R^n} M_{\bar{\Psi}_1,\bar{\Psi}_2}(fv_1,gv_2)^p\,dx.
  \end{align*}
\end{proof}

\subsection*{Bilinear fractional integral operators}

Recall from the Introduction that, given $0<\alpha<n$, we define the bilinear fractional integral operator
$$BI_\al(f,g)(x)=\int_{\R^n} \frac{f(x-y)g(x+y)}{|y|^{n-\al}}\,dy$$
and bilinear fractional maximal operator
$$BM_\al(f,g)(x)=\sup_{r>0}\frac{1}{(2r)^{n-\al}}\int_{[-r,r]^n}
|f(x-y)g(x+y)|\,dy.$$ 

Also recall the following, less singular version of the bilinear fractional
integral operator,
$$\mathcal
I_\al(f,g)(x)=\int_{\R^{2n}}\frac{f(y)g(z)}{(|x-y|+|x-z|)^{2n-\al}}\,dydz,
\qquad 0<\al<2n,$$ 
and the associated maximal operator
$$\mathcal M_\al(f,g)(x)=\sup_{Q\ni x}|Q|^{\frac{\al}{n}}\dashint_Q
|f(y)|\,dy \cdot \dashint_Q |g(z)|dz.  $$

 A similar calculation to that in the linear case shows that $BM_\al(f,g)\lesssim BI_\al(f,g)$ and
$\mathcal M_\al(f,g)\lesssim\mathcal I_\al(f,g)$ when $f,g\geq 0$; moreover, it was
shown in \cite{M1} (via $A_\infty$ extrapolation) that for
$0<p<\infty$ and $w\in A_\infty$,
$$\|\mathcal I_\al(f,g)\|_{L^p(w)}\leq C\|\mathcal M_\al(f,g)\|_{L^p(w)}.$$

Here we use extrapolation to give a new proof of the following analogous inequality
for $BI_\alpha$ and $M_\alpha$.  This result was first proved in~\cite{M2}.

\begin{theorem} \label{thm:bilinear-CF}
Given $0<p\leq 1$ and $w\in RH_{(\frac{1}{p})'}$, then 
$$\|BI_\al(f,g)\|_{L^p(w)}\leq C\|\mathcal M_\al(f,g)\|_{L^p(w)}.$$
\end{theorem}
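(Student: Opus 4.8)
The plan is to reduce the theorem, via the reverse H\"older extrapolation of Theorem~\ref{RHextrap}, to a single weighted $L^1$ inequality, and then to prove that inequality by dominating $BI_\al$ against the weight by the \emph{less singular} operator $\mathcal I_\al$ and invoking the bound from~\cite{M1} recalled in the Introduction.

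First I would set up the extrapolation. Since $|BI_\al(f,g)|\le BI_\al(|f|,|g|)$ and $\M_\al(f,g)=\M_\al(|f|,|g|)$, it suffices to treat $f,g\ge 0$; as usual one takes $f,g\in L^\infty_c$ and works with the truncations $\min(BI_\al(f,g),N)\chi_{B(0,N)}$ to guarantee finiteness of the norms, letting $N\to\infty$ at the end by monotone convergence. Let $\F$ be the family of pairs $(BI_\al(f,g),\M_\al(f,g))$. Apply Theorem~\ref{RHextrap} with $q_0=1$ and $p_0=1$: here $(q_0/p_0)'=\infty$, so the hypothesis~\eqref{eqn:RHExtrapol1} to be checked is
\[ \|BI_\al(f,g)\|_{L^1(w)}\le C\,\|\M_\al(f,g)\|_{L^1(w)},\qquad w\in RH_\infty. \]
Once this is known, Theorem~\ref{RHextrap} yields the desired inequality for every $0<p<1$ and $w\in RH_{(1/p)'}$, while the endpoint $p=1$ (for which $(1/1)'=\infty$) is exactly the displayed estimate; together these give the full range $0<p\le 1$.

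The core of the argument is the base case. Fixing $w\in RH_\infty$ and $f,g\ge 0$, the change of variables $a=x-y$, $b=x+y$ (Jacobian $2^n$, and $|y|^{n-\al}=2^{-(n-\al)}|a-b|^{n-\al}$) turns $\int_{\R^n}BI_\al(f,g)\,w\,dx$ into
\[ 2^{-\al}\int_{\R^n}\int_{\R^n}\frac{f(a)\,g(b)}{|a-b|^{n-\al}}\,w\!\left(\tfrac{a+b}{2}\right)da\,db. \]
The $RH_\infty$ hypothesis now lets me replace the value of $w$ at the midpoint by an average over a ball of radius $\approx|a-b|$: for a.e.\ $(a,b)$, with $Q$ the cube centered at $\tfrac{a+b}{2}$ of side length $\approx|a-b|$,
\[ w\!\left(\tfrac{a+b}{2}\right)\le\esssup_{Q}w\le[w]_{RH_\infty}\dashint_Q w\,dc=\frac{[w]_{RH_\infty}}{|Q|}\int_Q w(c)\,dc. \]
Since $|a-c|+|b-c|\approx|a-b|$ whenever $c\in Q$, substituting this bound and applying Tonelli's theorem gives
\[ \int_{\R^n}BI_\al(f,g)\,w\,dx\lesssim\int_{\R^n}\left(\int_{\R^n}\int_{\R^n}\frac{f(a)\,g(b)}{(|a-c|+|b-c|)^{2n-\al}}\,da\,db\right)w(c)\,dc=\int_{\R^n}\mathcal I_\al(f,g)\,w\,dc. \]
Finally, $RH_\infty\subset A_\infty$, so the inequality from~\cite{M1} (taken with $p=1$) gives $\|\mathcal I_\al(f,g)\|_{L^1(w)}\lesssim\|\M_\al(f,g)\|_{L^1(w)}$; combining the last two displays completes the base case, and hence the theorem.

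The hard part is the midpoint-averaging step: it is precisely the class $RH_\infty$ (and not a general $A_\infty$ weight) that allows one to dominate the ``antidiagonal'' operator $BI_\al$ by the full, less singular operator $\mathcal I_\al$ when tested against the weight, and this is what is responsible for the reverse H\"older hypothesis $w\in RH_{(1/p)'}$ surviving into the conclusion after extrapolation. Everything else — the reduction to $q_0=p_0=1$, the change of variables, and the appeal to~\cite{M1} — is routine.
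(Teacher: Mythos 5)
Your proof is correct, and it takes a genuinely different route from the paper. You both reduce by Theorem~\ref{RHextrap} (with $p_0=q_0=1$) to a single $L^1(w)$ estimate for $w\in RH_\infty$, but from there the arguments diverge. The paper dominates $BI_\al$ pointwise by the dyadic operator $BI_\al^\D$ from~\cite[Theorem~3.2]{M2}, pulls $\sup_Q w$ out of each cube, changes variables to pass to averages over $3Q$, and then spends most of the proof reducing the sum over \emph{all} dyadic cubes to a sparse sum via a Calder\'on--Zygmund stopping time (using the weak $L^1\times L^1\to L^{1/2,\infty}$ bound for $\M$ to verify sparseness), finally invoking $RH_\infty$ again to get $w(Q)\lesssim w(E_Q)$. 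You instead make the change of variables $a=x-y$, $b=x+y$ directly in $\int BI_\al(f,g)\,w$, and then use $RH_\infty$ \emph{once}, to replace the pointwise value $w\bigl(\tfrac{a+b}{2}\bigr)$ by an average over a cube of side $\approx|a-b|$; Tonelli and the comparability $|a-b|\approx|a-c|+|b-c|$ on that cube then give $\int BI_\al(f,g)\,w\lesssim\int\mathcal I_\al(f,g)\,w$, and the rest is the known Coifman--Fefferman inequality for $\mathcal I_\al$ from~\cite{M1}. (The pointwise step is legitimate: by Lebesgue differentiation, for a.e.\ $x$ and every cube $Q\ni x$ one has $w(x)\le\esssup_Q w\le [w]_{RH_\infty}\,\dashint_Q w$, and the preimage under $(a,b)\mapsto\tfrac{a+b}{2}$ of a null set is null.) Your argument is shorter and conceptually cleaner — it isolates exactly where the $RH_\infty$ hypothesis is used (to smear the antidiagonal singularity of $BI_\al$ out to the point singularity of $\mathcal I_\al$) and then outsources the sparse machinery entirely to the $A_\infty$ extrapolation result of~\cite{M1} for $\mathcal I_\al$. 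The trade-off is self-containment: the paper re-derives the sparse estimate from scratch, so its appeal to~\cite{M2} is only for a pointwise dyadic domination, whereas your proof leans on~\cite{M1} as a black box for the full Coifman--Fefferman inequality for $\mathcal I_\al$. Both are valid; yours is arguably the more illuminating reduction.
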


\begin{proof} 
  Our proof is similar in parts to the proof
  of~\cite[Theorem~1.8]{M2}, so we will only sketch the details. We
  will prove that the hypotheses of Theorem~\ref{RHextrap} are
  satisfied when $p_0=q_0=1$:  i.e., we will
  show that if $w\in RH_\infty$, then  we have
$$\int_{\R^n}BI_\al(f,g) w\,dx\lesssim \int_{\R^n} \M_\al(f,g) w\,dx$$
for non-negative functions $f$ and $g$.  

In \cite[Theorem 3.2]{M2} it was shown that $BI_\al$ is dominated pointwise
by the  dyadic operator
$$BI_\al(f,g)(x)\lesssim BI_\al^\D(f,g)(x):= \sum_{Q\in \D}
\frac{|Q|^{\frac{\al}{n}} }{|Q|}\int_{|y|\leq
  \ell(Q)}{f(x-y)g(x+y)}\,dy\cdot \chi_Q(x),$$
where $\D$ is the standard dyadic grid.
Now let $w\in RH_\infty$; then we estimate as follows:
\begin{multline*}
\int_{\R^n}BI_\al^\D(f,g) w\,dx=\sum_{Q\in \D}\frac{|Q|^{\frac{\al}{n}} }{|Q|}\int_Q\int_{|y|\leq \ell(Q)}{f(x-y)g(x+y)w(x)}\,dydx \\
\lesssim \sum_{Q\in \D}\frac{|Q|^{\frac{\al}{n}}
}{|Q|}(\,\sup_{Q}w)\int_Q\int_{|y|\leq \ell(Q)}{f(x-y)g(x+y)}\,dydx. 
\end{multline*}
If we make the change of variables $u=x-y$, $v=x+y$ and use the
$RH_\infty$ condition on $w$, then 
\begin{equation}\label{BIbdd}\int_{\R^n}BI_\al^\D(f,g)w\,dx\lesssim \sum_{Q\in \D}{|Q|^{\frac{\al}{n}} }\,\left(\dashint_{3Q} f\,dx\right)\left(\dashint_{3Q} g\,dx\right)\left(\int_Qw\,dx\right).\end{equation}

This sum is similar to the sum that appeared in the proof of Theorem
\ref{bilinearp>1} except that it is over all dyadic cubes.
However, we will bound it by a sum over a sparse family.
Fix $a>1$; the exact value will be chosen later.  Fix $k\in \Z$ and
let
$$\Ca^k=\left\{Q\in \D: a^k<\left(\dashint_{3Q}
    f\,dx\right)\left(\dashint_{3Q} g\,dx\right)\leq
  a^{k+1}\right\}.$$ 
Let $\Sp^k$ be all cubes in $\D$ that are maximal with respect to inclusion and satisfy
$$a^k<\left(\dashint_{3Q} f\,dx\right)\left(\dashint_{3Q} g\,dx\right).$$
(By an approximation argument we may assume $f$ and $g$ are bounded
and have compact support, so such maximal cubes exist.)  It is clear
that every $Q\in \Ca^k$ is a subset of a unique cube in $\Sp^k$.  We
can now estimate the righthand side of inequality \eqref{BIbdd} as
follows:
\begin{align*}
& \sum_{Q\in \D}{|Q|^{\frac{\al}{n}} }\,\left(\dashint_{3Q}
  f\,dx\right)\left(\dashint_{3Q}
  g\,dx\right)\left(\int_Qw\,dx\right) \\ 
&  \qquad \qquad=\sum_{k\in \Z}\sum_{Q\in \Ca^k}{|Q|^{\frac{\al}{n}} }\,\left(\dashint_{3Q} f\,dx\right)\left(\dashint_{3Q} g\,dx\right)\left(\int_Qw\,dx\right)\\
&  \qquad \qquad\leq \sum_{k\in \Z}a^{k+1}\sum_{Q\in \Ca^k}{|Q|^{\frac{\al}{n}} }\int_Qw\,dx\\
&  \qquad \qquad= \sum_{k\in \Z}a^{k+1}\sum_{P\in \Sp^k }\sum_{{Q\in \D(P)}}{|Q|^{\frac{\al}{n}} }\int_Qw\,dx\\
&  \qquad \qquad=\sum_{k\in \Z}a^{k+1}\sum_{P\in \Sp^k }\sum_{j=1}^\infty\sum_{\substack{Q\in \D(P)\\ \ell(Q)=2^{-j}\ell(P)}}{|Q|^{\frac{\al}{n}} }\int_Qw\,dx\\
&  \qquad \qquad\lesssim \sum_{k\in \Z}a^{k+1}|Q|^{\frac{\al}{n}}\sum_{P\in \Sp^k }\int_Pw\,dx\\
&  \qquad \qquad\lesssim \sum_{Q\in \Sp}{|Q|^{\frac{\al}{n}}
  }\,\left(\dashint_{3Q} f\,dx\right)\left(\dashint_{3Q}
  g\,dx\right)\left(\int_Qw\,dx\right), 
\end{align*}
where in the last line  we let $\Sp=\bigcup_k\Sp^k.$  

We claim that $\Sp$ is a sparse set.  To see this, let 
$$\Omega_k=\bigcup_{Q\in \Sp^k}Q; $$
then $\Omega_{k}\supseteq \Omega_{k+1}$ and given $Q\in \Sp^k$ we have 
\begin{multline*}\Big|\bigcup_{\substack{Q'\in\Sp\\Q'\subsetneq Q}}Q'\Big|=|Q\cap \Omega_{k+1}|\leq |\{x:\M(f\chi_{3Q},g\chi_{3Q})(x)>a^{k+1}\}|\\
\leq \left[\frac{C}{a^{k+1}}\Big(\int_{3Q} f\,dx\Big)\Big(\int_{3Q} g\,dx\Big)\right]^{\frac{1}{2}}\lesssim \frac{C}{a^{\frac{1}{2}}}|Q|.
\end{multline*}
The second inequality follows from the fact that $\M : L^1(\R^n)\times
L^1(\R^n)\ra L^{1/2,\infty}(\R^n)$.  But then, if  we choose $a$
sufficiently large,  we  get that $\Sp$ is sparse.  

We can now complete the proof.  Since $w\in RH_\infty$ we have
$w(Q)\lesssim w(E_Q)$ ; since the sets $\{E_Q\}$ are disjoint,
\begin{multline*}
\sum_{Q\in \Sp}{|Q|^{\frac{\al}{n}} }\,\left(\dashint_{3Q} f\,dx\right)\left(\dashint_{3Q} g\,dx\right)w(Q)\lesssim \sum_{Q\in \Sp}{|Q|^{\frac{\al}{n}} }\,\left(\dashint_{3Q} f\,dx\right)\left(\dashint_{3Q} g\,dx\right)w(E_Q)\\
\leq \sum_{Q\in \Sp}\int_{E_Q}\M_\al(f,g)w\,dx\leq \int_{\R^n}\M_\al(f,g) w\,dx.
\end{multline*}
If we combine the above estimates, we get the desired inequality and
the proof is complete.
\end{proof}

\bibliographystyle{plain}
\bibliography{RHExtrapolation}

\end{document}